\newtheorem{theorem}{Theorem}
\newtheorem{lemma}[theorem]{Lemma}
\theoremstyle{definition}
\def\F{\rm F}
\newcommand{\Z}{\mathrm Z}
\newcommand{\ZZ}{\mathbb Z}
\newcommand{\cH}{{\mathcal{H}}}
\newcommand{\AGL}{\mathop{\mathrm{AGL}}}
\newcommand{\soc}{\mathop{\mathrm{soc}}}
\newcommand{\GL}{\mathop{\mathrm{GL}}}
\newcommand{\SL}{\mathop{\mathrm{SL}}}
\newcommand{\PSL}{\mathop{\mathrm{PSL}}}
\newcommand{\Aut}{\mathop{\mathrm{Aut}}}
\newcommand{\barG}{{\bar{G}}}
\newcommand{\barGa}{{\bar{\Gamma}}}
\newcommand{\U}{\hskip 1pt \mathcal U}
\newcommand{\p}{\wp}
\renewcommand{\wr}{\mathop{\mathrm{wr}}}
\newcommand{\norml}{\vartriangleleft}
\begin{document}
%\title{Locally quasiprimitive arc-transitive graphs of certain orders}
\title{On the orders of arc-transitive graphs\footnote{This paper will appear in {\em Journal of Algebra}.}}

\author[M.D.E.\ Conder]{Marston D.E.\ Conder}
\address{Marston D.E.\  Conder, \newline  ${}$\hskip 20pt 
Department of Mathematics, University of Auckland,  
  \newline ${}$\hskip 20pt 
  Private Bag 92019, Auckland 1142, New Zealand}
\email{m.conder@auckland.ac.nz}                

\author[C.H.\ Li]{Cai Heng Li}
\address{Cai Heng Li,  \newline ${}$\hskip 20pt 
School of Mathematics and Statistics, University of Western Australia, 
  \newline ${}$\hskip 20pt 
  Crawley WA 6009, Australia} 
 \email{cai.heng.li@uwa.edu.au}

\author[P. Poto\v{c}nik]{Primo\v{z} Poto\v{c}nik}
%\address{Primo\v{z} Poto\v{c}nik, Institute of Mathematics, Physics \& Mechanics, 
% University %of Ljubljana, 
% \newline ${}$\hskip 20pt 
%  of Ljubljana, Jadranska 19, 1000 Ljubljana, Slovenia}
\address{Primo\v{z} Poto\v{c}nik, \newline ${}$\hskip 20pt 
Faculty of Mathematics and Physics, University of Ljubljana, \newline ${}$\hskip 20pt  Jadranska 21, SI-1000 Ljubljana, Slovenia,
 \newline ${}$\hskip 30pt   
also affiliated with 
 \newline ${}$\hskip 20pt  
IAM, University of Primorska, Muzejski trg 2, SI-6000 Koper, Slovenia; {\sc and}
 \newline ${}$\hskip 20pt
 IMFM, Jadranska 19, SI-1000 Ljubljana, Slovenia.}
\email{primoz.potocnik@fmf.uni-lj.si}

 \dedicatory{Dedicated to the  memory of \'Akos Seress}

\begin{abstract} 
A graph is called {\em arc-transitive\/} (or {\em symmetric\/}) if its automorphism group 
has a single orbit on ordered pairs of adjacent vertices, and $2$-arc-transitive 
its automorphism group has a single orbit on ordered paths of length $2$. 
In this paper we consider the orders of such graphs, for given valency. 
We prove that for any given positive integer $k$, 
there exist only finitely many connected $3$-valent $2$-arc-transitive 
graphs whose order is $kp$ for some prime $p$, 
and that if $d\ge 4$, then there exist only finitely many connected $d$-valent $2$-arc-transitive 
graphs whose order is $kp$ or $kp^2$ for some prime $p$.
We also prove that there are infinitely many (even) values of $k$ for which 
there are only finitely many connected $3$-valent symmetric graphs of order $kp$ 
where $p$ is prime. 
\\[-30pt] 
\end{abstract}

\maketitle

%%%%%%%%%%%
\section{Introduction}
\label{sec:intro}
%%%%%%%%%%%

A graph is called {\em arc-transitive\/} (or {\em symmetric\/}) if its automorphism 
group has a single orbit on the set of all ordered pairs of adjacent vertices in the graph. 
The study of such graphs has a long and interesting history, highlighted at an early 
stage by ingenious work by Tutte~\cite{tutte,tutte2} on the cubic ($3$-valent) case. 

This paper concerns the orders of finite symmetric graphs of given valency.  

Vertex-transitive graphs of prime order were shown to be circulants 
(Cayley graphs for cyclic groups) by Turner~\cite{turner}, and then those which are 
symmetric were determined by Chao~\cite{Chao}.  
A few years later, Cheng and Oxley~\cite{ChengOxley} found all symmetric graphs 
of order $2p$ for $p$ prime.  (In fact Cheng and Oxley classified all graphs of order $2p$ 
that are both vertex- and edge-transitive, and proved that all of these graphs are symmetric.) 

More recently, numerous papers have been published in which the authors 
classify all symmetric graphs with given small valency (usually $3$, $4$ or $5$)
and with order of the form $kp$ or $kp^2$ for a fixed integer $k$ and variable prime $p$ 
(see \cite{FengKwak, FengKwak10p, FengKwakWang} for example), and we are 
aware of a number of other attempts to achieve such classifications, using voltage graphs  
and more general covering techniques. 
In many of these papers, the authors show that for a given $k$, there can be only finitely 
many such graphs.

We will show that this is always true when we restrict our attention to the case 
where the graph is $2$-arc-transitive (meaning that its automorphism group 
has a single orbit on the set of all ordered paths $(u,v,w)$ of length $2$) and 
has given valency greater than $3$, 
as well as in the case of $2$-arc-transitive cubic graphs of order $kp$.

${}$ 
\bigskip

In fact we will prove the following:

\begin{theorem}
\label{thm:f2} 
 Let $k$ and $d$ be given positive integers, with $d\ge 3$.
 If $d=3$, then there exist only finitely many connected $d$-valent $2$-arc-transitive 
 graphs of order $kp$ for some prime $p$.
 If $d\ge 4$, then there exist only finitely many connected $d$-valent $2$-arc-transitive 
 graphs of order $kp$ or $kp^2$ for some prime $p$.
 \end{theorem}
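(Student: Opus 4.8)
The plan is to fix $k$ and $d$, and to show that once the prime $p$ is large (in terms of $k$ and $d$) every such graph is a regular abelian cover of one of finitely many bounded base graphs, and then to bound the admissible covers. So let $\Gamma$ be a connected $d$-valent $2$-arc-transitive graph of order $n\in\{kp,kp^2\}$, put $G=\Aut\Gamma$, and pick a vertex $v$. Since $\Gamma$ is $2$-arc-transitive, the stabiliser obeys $|G_v|\le f(d)$ for an absolute bound $f$ (Tutte \cite{tutte} for $d=3$, a Weiss-type bound for $d\ge4$), so $|G|=n\,|G_v|$. Hence, as soon as $p>f(d)$ and $p\nmid k$, the Sylow $p$-subgroup $P$ has order exactly $p$ or $p^2$, whatever the isomorphism type of $G$. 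A Sylow count now does the decisive work: the number of Sylow $p$-subgroups is $\equiv1\pmod p$ and divides $|G|/|P|=k\,|G_v|\le k\,f(d)$, so once $p>k\,f(d)$ we get $n_p=1$, i.e.\ $P\norml G$. Because $p\nmid|G_v|$, no nontrivial element of $P$ fixes a vertex, so $P$ is semiregular; as $G$ permutes its orbits transitively, there are exactly $k$ of them, each of length $|P|$.

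Consider first $n=kp$ and $k\ge3$ (the cases $k\le2$, of orders $p$ and $2p$, give only finitely many graphs of fixed valency by Chao \cite{Chao} and Cheng--Oxley \cite{ChengOxley}). Then $P\cong\ZZ_p$ has at least three orbits, the normal quotient $\Sigma:=\Gamma_P$ is a connected $d$-valent $2$-arc-transitive graph of order $k$ under $\bar G:=G/P$, and $\Gamma\to\Sigma$ is a regular $\ZZ_p$-cover. There are only finitely many such base graphs $\Sigma$, so the theorem reduces to the lemma: a fixed $2$-arc-transitive graph $\Sigma$ admits a connected $2$-arc-transitive regular $\ZZ_p$-cover for only finitely many $p$. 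By the standard lifting correspondence such a cover is a $1$-dimensional $\bar G$-submodule of the mod-$p$ cycle space $H_1(\Sigma;\mathbb F_p)$; and since $\mathbb F_p\bar G$ is semisimple with ordinary and modular constituents agreeing whenever $p\nmid|\bar G|$, it suffices to prove that the integral cycle-space representation has no $1$-dimensional constituent over $\mathbb C$, leaving at most the finitely many $p\mid|\bar G|$.

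This character computation is the heart, and it is exactly where $2$-arc-transitivity is used. From the chain complex of $\Sigma$ one gets, as virtual $\bar G$-modules, $[H_1]=\mathrm{Ind}_{\bar G_e}^{\bar G}(\epsilon)-\mathrm{Ind}_{\bar G_v}^{\bar G}(\mathbf1)+\mathbf1$, where $\bar G_e$ is an edge stabiliser and $\epsilon$ its sign character on the edge-reversing quotient $\ZZ_2$. By Frobenius reciprocity the multiplicity of a linear character $\chi$ in $H_1$ is $\langle\chi|_{\bar G_e},\epsilon\rangle-\langle\chi|_{\bar G_v},\mathbf1\rangle+\langle\chi,\mathbf1\rangle$. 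If this were positive then $\chi|_{\bar G_e}=\epsilon$, forcing $\chi$ to be trivial on the arc stabiliser $(\bar G_v)_w$; but $(\bar G_v)_w$ is a point stabiliser of the $2$-transitive action of $\bar G_v$ on the $d$ neighbours of $v$, and the only linear character of a $2$-transitive group trivial on a point stabiliser is the trivial one (its permutation module is $\mathbf1$ plus an irreducible). Thus $\chi|_{\bar G_v}=\mathbf1$, whence connectivity of $\Sigma$ (giving $\bar G=\langle\bar G_v,t\rangle$ for an edge-reversing $t$) forces $\chi=\mathbf1$; substituting back, the multiplicity is $0$. So $H_1(\Sigma;\mathbb C)$ has no linear constituent, which proves the lemma and settles the case $n=kp$ for every $d\ge3$.

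For $n=kp^2$ with $d\ge4$, the same opening makes $P$ a normal semiregular group of order $p^2$, so $\Gamma$ is a regular cover of a fixed base $\Sigma$ of order $k$ with group $P\cong\ZZ_{p^2}$ or $\ZZ_p^2$. If $P$ is cyclic, or if $P\cong\ZZ_p^2$ is $\bar G$-reducible, then $P$ contains a $G$-normal subgroup of order $p$; quotienting by it realises $\Gamma$ as a $\ZZ_p$-cover of a $2$-arc-transitive graph of order $kp$, and the already-proved $kp$ case (bounding the intermediate graph) together with the cover lemma makes these sub-cases finite. The remaining case---$P\cong\ZZ_p^2$ with $\bar G$ acting irreducibly---is the main obstacle, and is where the hypothesis $d\ge4$ is indispensable. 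The plan there is to confront the local $2$-transitive action with the rank-$2$ module: the voltages at $v$ give a $\bar G_v$-equivariant map from the arc permutation module $\mathbf1\oplus\pi_{d-1}$ into $P$, and since $\dim\pi_{d-1}=d-1\ge3>2=\dim P$ for $d\ge4$, its nonzero image is a trivial $\bar G_v$-submodule, i.e.\ $\bar G_v$ fixes a line of $P$; a short further argument using $2$-transitivity then rules out this (at most monomial) configuration for large $p$, contradicting irreducibility. Thus the irreducible case does not occur, $P$ always has a $G$-normal $\ZZ_p$, and we reduce to the $kp$ case. For $d=3$ this dimension count collapses---$\dim\pi_{d-1}=2=\dim P$, and the local group $\Sym(3)$ acts on $P$ as its $2$-dimensional irreducible---so the obstruction genuinely vanishes and produces infinite families of cubic $2$-arc-transitive graphs of order $kp^2$, which is precisely why the $kp^2$ conclusion is claimed only for $d\ge4$.
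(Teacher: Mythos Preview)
Your approach differs substantially from the paper's. The paper derives Theorem~\ref{thm:f2} as an immediate corollary of the structural Theorem~\ref{thm:fg}: that theorem (proved via the Dickson classification of subgroups of $\SL(2,p)$, through Lemmas~\ref{lem:GvGL}--\ref{lem:case5}) forces $G_v^{\,\Gamma(v)}\cong C_d$ or $D_d$ with $d$ prime whenever $p$ is large, and neither of these is $2$-transitive for $d\ge4$ (nor is $C_3$), so large $p$ is impossible. Your route via covering theory and the character formula $[H_1]=\mathrm{Ind}_{\bar G_e}^{\bar G}(\epsilon)-\mathrm{Ind}_{\bar G_v}^{\bar G}(\mathbf1)+\mathbf1$ is genuinely different, and your treatment of the $kp$ case is correct and attractive: the Frobenius-reciprocity computation does show that $H_1(\Sigma;\mathbb C)$ has no one-dimensional $\bar G$-constituent when $\bar G$ is $2$-arc-transitive on $\Sigma$, so only finitely many primes admit a $\bar G$-admissible $\ZZ_p$-cover.

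The gap is in the irreducible $kp^2$ case. You assert that ``the voltages at $v$ give a $\bar G_v$-equivariant map from the arc permutation module into $P$'', but this is not automatic: one must first split $G=P\rtimes\bar G$ by Schur--Zassenhaus, take the resulting $\bar G$-equivariant section $s\colon V(\Sigma)\to V(\Gamma)$, and then check that $w\mapsto\zeta(v,w)$ is well-defined on $\Sigma(v)$ (this amounts to $\bar G_{vw}$ centralising $z$, which follows from equality of arc-stabiliser orders in $G$ and $\bar G$, i.e.\ from $\Gamma\to\Sigma$ being a genuine cover). Granting that, your dimension argument does force $\pi_{d-1}\mapsto 0$ for $d\ge4$ (absolute irreducibility of $\pi_{d-1}$ holds because the centraliser algebra of a $2$-transitive permutation module is $2$-dimensional), whence all $\zeta(v,w)$ coincide and $\bar G_v$ \emph{centralises} $z:=\zeta(v,w_0)$, not merely fixes a line. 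But your ``short further argument \dots\ contradicting irreducibility'' is missing, and the hints you give (``$2$-transitivity'', ``monomial'') do not point to the actual finish. The clean completion is: $\bar G$-equivariance of $\zeta$ gives ${}^{\bar a}z=\zeta(\bar a\cdot(v,w_0))=\zeta(w_0,v)=-z$, so $\bar a$ negates $z$; since $\bar G=\langle\bar G_v,\bar a\rangle$, every element of $\bar G$ sends $z$ to $\pm z$, and the line $\mathbb F_p z$ is $\bar G$-invariant, contradicting irreducibility. With this in place your method bypasses the paper's appeal to the subgroup structure of $\SL(2,p)$ entirely, which is a real gain --- but as written the proposal does not supply either the construction of the voltage map or this final step.
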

 
Note that this theorem fails for $3$-valent arc-transitive graphs of order $kp^2$, 
as shown by the existence of infinitely many $3$-valent $2$-arc-transitive graphs 
of order $6p^2$ (for $p$ prime), obtainable as $\ZZ_p^{\,2}$-covers of $K_{3,3}$; 
see \cite[Table 1]{solvable}, or \cite[Theorem 5.1]{ConMa}. 
It also fails for both $3$- and $4$-valent graphs of order $kp^3$, 
as exhibited by the existence of infinitely many $3$-valent $2$-arc-transitive graphs 
of order $4p^3$, obtainable as $\ZZ_p^{\, 3}$-covers of $K_4$ (see \cite{FengKwak} 
or \cite[Theorem 4.1]{ConMa}), 
and  infinitely many $4$-valent $2$-arc-transitive graphs of order $5p^3$, 
obtainable as $\ZZ_p^{\,3}$-covers of $K_5$ (see \cite[Table 1]{KuzmanK5}).

\smallskip 
Theorem~\ref{thm:f2} is a consequence of a much more general (but also more 
technical) theorem, the formulation of which requires some definitions.

\smallskip 
Throughout this paper, we will let $\Gamma$ be a finite connected simple undirected graph, 
and let $V(\Gamma)$, $E(\Gamma)$ and $A(\Gamma)$ be its vertex set, edge set 
and arc set respectively, 
where an {\em arc\/} is an ordered pair $(u,v)$ of adjacent vertices.
Similarly, if $s$ is a positive integer, then an {\em $s$-arc} of $\Gamma$ is an ordered $(s+1)$-tuple 
$(v_0,v_1,v_2,\dots,v_s)$ of vertices of $\Gamma$ in which any two consecutive vertices 
are adjacent and any three consecutive vertices are pairwise distinct,  
and we will denote by $A_s(\Gamma)$ the set of all $s$-arcs of $\Gamma$.

The group of all automorphisms of $\Gamma$ is denoted by $\Aut(\Gamma)$. 
If this group has a single orbit on $V(\Gamma)$, or on $E(\Gamma)$, or $A(\Gamma)$, 
or $A_s(\Gamma)$, then $\Gamma$ is said to be  {\em vertex-transitive}, {\em edge-transitive},
{\em arc-transitive}, or {\em $s$-arc-transitive},  respectively. 
The term {\em symmetric} is synonymous with {\em arc-transitive}, in this context. 
More generally, if a subgroup $G$ of $\Aut(\Gamma)$ acts transitively on $V(\Gamma)$,  
or $E(\Gamma)$, or $A(\Gamma)$, or $A_s(\Gamma)$), then we say that the graph $\Gamma$ 
is {\em $G$-vertex-transitive}, {\em $G$-edge-transitive},  {\em $G$-arc-transitive}, 
or {\em $(G,s)$-arc-transitive}, respectively.  

Next, for any subgroup $G$ of $\Aut(\Gamma)$, and for any vertex $v\in V(\Gamma)$, 
let $G_v$ be the stabiliser $\{g \in G \mid v^g = v\}$ of $v$ in $G$, 
and let $G_v^{\,\Gamma(v)}$ denote the permutation group induced by the 
action of $G_v$ on the neighbourhood $\Gamma(v)$ of $v$. 
Also denote the kernel of this action by $G_v^{\,[1]}$.
Note that  a $G$-vertex-transitive graph $\Gamma$ is $G$-arc-transitive 
if and only if  $G_v^{\,\Gamma(v)}$ is transitive on $\Gamma(v)$,
and is $(G,2)$-arc-transitive if and only if $G_v^{\,\Gamma(v)}$ is $2$-transitive on $\Gamma(v)$.

A permutation group for which the stabiliser of every point is trivial is called {\em semiregular}. 
A permutation group in which every non-trivial normal subgroup is transitive is called {\em quasiprimitive}.

Suppose from now on that $\Gamma$ is $G$-vertex-transitive.
If the group $G_v^{\,\Gamma(v)}$ is permutation isomorphic to some permutation group $L$
(for some and therefore every vertex $v$ of $\Gamma$), then we say that $G$ is {\em locally $L$}.
Similarly, if $G_v^{\,\Gamma(v)}$ is a quasiprimitive permutation group, then we say that $G$ is {\em locally quasiprimitive}.
Also following \cite{Verret}, we say that a transitive permutation group $L$ is {\em graph-restrictive} 
provided there exists a constant $c = c(L)$ such that whenever $G$ is an arc-transitive, locally $L$ group 
of automorphisms of a graph $\Gamma$, the order of the stabiliser $G_v$ is at most $c(L)$.

If $N$ is any subgroup of $\Aut(\Gamma)$, one may construct the quotient 
graph $\Gamma/N$, the vertex set of which is the set of $N$-orbits on $V(\Gamma)$, 
with two such orbits adjacent in $\Gamma/N$ whenever there is an edge between them in $\Gamma$.
Here we note that there is a natural graph epimorphism $\wp\colon \Gamma \to \Gamma/N$, 
mappng a vertex $v$ to the $N$-orbit $v^N$. 
If $\wp$ happens to map the neighbourhood $\Gamma(v)$ of every vertex $v\in V(\Gamma)$ 
bijectively onto the neighbourhood of $v^N$ in $\Gamma/N$,
then we say that $\p$ is a {\em $N$-regular covering projection} (or simply a {\em regular covering projection}).

%Finally, we remind the reader that whenever $L_1$ and $L_2$ are permutation groups, then
%$L_1\cong L_2$ means that $L_1$ and $L_2$ are permutation isomorphic and not just that they are isomorphic as abstract groups.

\medskip

We can now state our more general theorem. 
Here we let $C_n$ and $D_n$ stand respectively for the cyclic group of order $n$ 
and the dihedral group of order $2n$, in their natural transitive actions on $n$ points. 
%As we shall see later, Theorem~\ref{thm:f2} is a direct consequence of Theorem~\ref{thm:fg} below.

\begin{theorem}
\label{thm:fg}
Let $L$ be a quasiprimitive graph-restrictive permutation group of degree $d\ge 3$,  
with corresponding constant $c(L)$, and let $k$ be a given positive integer, 
and $p$ any prime satisfying $p\ge kc(L)$.
Also suppose there exists a $G$-arc-transitive graph $\Gamma$ of order $kp^\alpha$ 
where $\alpha = 1$ or $2$, such that $G_v^{\,\Gamma(v)}$ is permutation isomorphic to $L$, 
and let $P$ be a Sylow $p$-subgroup of $G$. Then the following hold{\,\em :}
\begin{itemize}
 \item[{\rm (a)}]
The subgroup $P$ is normal in $G$, has order $p^\alpha$, and acts semiregularly on $V(\Gamma)\,;$ 
\\[-10pt] 
 \item[{\rm (b)}]
If $k\ge 3$, $\bar{\Gamma} = \Gamma/P$ and $\bar{G} = G/P$, 
then the natural projection $\Gamma \to \bar{\Gamma}$ is a regular covering projection,
$\bar{\Gamma}$ has order $k$, and $\bar{G}$ is an arc-transitive 
and locally $L$ group of automorphisms of $\bar{\Gamma}\,;$ 
\\[-10pt] 
 \item[{\rm (c)}]
The stabiliser $G_v$ is isomorphic to a subgroup of $\Aut(P)\,;$ 
\\[-10pt] 
\item[{\rm (d)}]
 The degree $d$ of $L$ $($the valency of $\Gamma)$ is prime, and one of the following holds$\,:$ \\[-10pt] 
\begin{itemize}
\item[{\rm (i)\,}]
% $P$ is cyclic, $p^\alpha \equiv 1\> {\rm mod}\> 2d$, $G_v^{\,\Gamma(v)}\cong C_d$, $|\bar{G}| = kd$ and $\bar{G}/[\bar{G},\bar{G}] \cong C_{2d}$;
$P$ is cyclic, \ $p \equiv 1$ mod $2d$, \ $G_v^{\,\Gamma(v)}\cong C_d$, $|\bar{G}| = kd\,$ 
   and $\,\bar{G}/[\bar{G},\bar{G}] \cong C_{2d}\,;$ \\[-10pt] 
\item[{\rm (ii)}]
$\alpha =2$, $P$ is elementary abelian of order $p^2,$ and $G_v^{\,\Gamma(v)}\cong C_d$  or $D_d$.
\end{itemize}
\end{itemize}
\end{theorem}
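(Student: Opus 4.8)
The plan is to prove the four parts in order, squeezing everything possible out of Sylow theory before invoking the structure theory of locally quasiprimitive graphs. For part~(a), the key input is that $L$ is graph-restrictive, so $|G_v|\le c(L)$, and hence by vertex-transitivity $|G|=|V(\Gamma)|\,|G_v|=kp^\alpha|G_v|$ with $k|G_v|\le kc(L)\le p$. First I would note that $P\cap G_v$ is a $p$-subgroup of $G_v$ and therefore trivial, since its order is a power of $p$ dividing $|G_v|\le c(L)\le p$; thus $P$ is semiregular and the $p$-part of $|G|$ is exactly $p^\alpha$, so $|P|=p^\alpha$. Normality then follows by counting Sylow $p$-subgroups: their number $n_p$ divides $|G:P|=k|G_v|\le p$ and satisfies $n_p\equiv 1\pmod p$, which forces $n_p=1$. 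This is the only place where the hypothesis $p\ge kc(L)$ is used in full strength.

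For part~(b), with $k\ge 3$ the normal semiregular subgroup $P$ has exactly $k\ge 3$ orbits on $V(\Gamma)$, each of size $p^\alpha$. I would then apply the standard normal-quotient reduction for $G$-locally quasiprimitive graphs: since $L=G_v^{\,\Gamma(v)}$ is quasiprimitive and the normal subgroup $P$ has more than two orbits, the neighbours of each vertex lie in distinct $P$-orbits, so $\wp\colon\Gamma\to\barGa=\Gamma/P$ is a regular covering projection onto a graph of order $k$, and the local action is preserved, giving $\barG_{\bar v}^{\,\barGa(\bar v)}\cong L$. In particular $\barG=G/P$ is arc-transitive and locally $L$ on $\barGa$.

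Part~(c) is where I expect the main difficulty. Groups of order $p$ or $p^2$ are abelian, so $P$ is abelian; identifying the orbit $v^P$ with $P$ via $v^x\leftrightarrow x$, a direct computation shows that $g\in G_v$ acts on $v^P$ as the automorphism $x\mapsto g^{-1}xg$, so the action of $G_v$ on the fibre $v^P$ coincides with the conjugation map $G_v\to\Aut(P)$, whose kernel is $C_{G_v}(P)$. Thus (c) is equivalent to $C_{G_v}(P)=1$, i.e.\ $C_G(P)=P$. Since $C_G(P)\norml G$, its local action $C_{G_v}(P)^{\Gamma(v)}$ is a normal subgroup of the quasiprimitive group $L$, hence trivial or transitive; and a short connectivity argument (using that every element of $C_{G_v}(P)$ fixes $v^P$ pointwise and commutes with $P$) shows that a nontrivial such element must move a neighbour of $v$, so $C_{G_v}(P)\cong C_{G_v}(P)^{\Gamma(v)}$. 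The delicate step is to rule out the transitive possibility: for this I would pass to the normal $p'$-subgroup $Q=\OO_{p'}(C_G(P))\norml G$, observe that $Q$ has more than two orbits (its orbit sizes are coprime to $p$ and divide $k$, so there are at least $p^\alpha$ of them), apply the same reduction to conclude that $Q$ is semiregular, and then play the resulting transitivity of $C_G(P)$ on a single fibre against the quasiprimitivity of $L$; closing this final contradiction is the step I expect to require the most care.

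Finally, for part~(d) the embedding $G_v\hookrightarrow\Aut(P)$ obtained in~(c) is the main lever, together with the explicit structure $\Aut(C_p)\cong C_{p-1}$, $\Aut(C_{p^2})\cong C_{p(p-1)}$ (both cyclic) and $\Aut(C_p\times C_p)\cong\GL_2(p)$. If $P$ is cyclic, then $G_v$ is cyclic, so $L$ is a cyclic transitive, hence regular, group of degree $d$; quasiprimitivity of a cyclic regular group forces $d$ to be prime and $L\cong C_d$, yielding case~(i). Here $d\mid p-1$ comes from $C_d\le\Aut(P)$, and I would upgrade this to $2d\mid p-1$ and compute $\barG/[\barG,\barG]\cong C_{2d}$ by combining the local $C_d$ with an edge-reversing automorphism (producing an element of order $2d$ acting on $P$ inside the cyclic group $\Aut(P)$) and using $|\barG|=k\,|\barG_{\bar v}|=kd$. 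If instead $\alpha=2$ and $P\cong C_p\times C_p$, then $G_v\le\GL_2(p)$, and matching the action of $G_v$ on the $d$ neighbours of $v$ with its action on the corresponding one-dimensional data in $P$ pins $L$ down to lie in the normaliser of a torus, giving $L\cong C_d$ or $D_d$ with $d$ prime, which is case~(ii). The genuinely hard points are the exclusion of the transitive case in~(c) and, within~(d), the arithmetic refinement from $d\mid p-1$ to $2d\mid p-1$.
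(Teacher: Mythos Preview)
Your treatment of~(a), (b), and the cyclic-$P$ branch of~(d) is essentially on target and matches the paper's route. Two minor corrections: in~(c), the restatement ``i.e.\ $C_G(P)=P$'' is false --- what you need (and what the paper proves) is only that $C_G(P)$ acts \emph{semiregularly}; in general $C_G(P)=J\times P$ with $J$ a possibly nontrivial normal $p'$-subgroup of $G$. Your detour through the ``transitive case'' is also unnecessary: once you have $Q=\OO_{p'}(C_G(P))$ semiregular (which you do prove), note that every element of $C_{G_v}(P)$ has order coprime to $p$ (it lies in $G_v$), and in the direct product $Q\times P$ every $p'$-element lies in $Q$; hence $C_{G_v}(P)\le Q_v=1$ immediately. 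This is exactly the paper's argument (Lemma~\ref{lem:GvGL}), and it closes~(c) without any ``delicate step''. For~(b), the standard reduction (Lemma~\ref{lem:Cheryl}) gives the regular-covering and locally-$L$ conclusions under local \emph{primitivity}, not mere quasiprimitivity; this is harmless once~(d) shows $d$ is prime, but the logical dependence should be made explicit.

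The genuine gap is in part~(d) for $\alpha=2$ and $P\cong C_p\times C_p$. Your sentence ``matching the action of $G_v$ on the $d$ neighbours of $v$ with its action on the corresponding one-dimensional data in $P$ pins $L$ down to lie in the normaliser of a torus'' is not an argument: there is no canonical correspondence between $\Gamma(v)$ and lines in $P$, and the two $G_v$-actions are a priori unrelated. The paper's proof here is substantially deeper. One sets $H=G_v\cap\SL(2,p)\norml G_v$; if $H\le G_v^{[1]}$ then $L\cong G_v/G_v^{[1]}$ is a quotient of the cyclic group $G_v/H$ and we are done, otherwise $H^{\Gamma(v)}$ is transitive. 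Since $p\nmid|H|$, Dickson's classification of subgroups of $\SL(2,p)$ applies: $H$ is cyclic, dicyclic, $\SL(2,3)$, $\hat S_4$, or $\SL(2,5)$. The first two (which \emph{are} the torus-normaliser cases you have in mind) lead, after a computation with diagonal matrices, to $L\cong C_d$ or $D_d$ with $d$ prime. But the three exceptional cases must be eliminated separately: for $\SL(2,3)$ and $\hat S_4$ one shows $d\ge 5$ via Lemma~\ref{lem:Op} and then observes that $H/\langle -I_2\rangle\cong A_4$ or $S_4$ has no elementary abelian subgroup of order $\ge 5$; for $\SL(2,5)$ a genuinely delicate argument (Lemma~\ref{lem:charT}) shows that $\langle -I_2\rangle$ is characteristic in the arc-stabiliser $G_{uv}$, hence normal in $\langle G_v,G_{\{u,v\}}\rangle=G$, contradicting faithfulness. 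Your proposal does not engage with any of these cases, and the ``normaliser of a torus'' heuristic cannot distinguish them.
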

%
%
%The degree $d$ of $L$ is prime. The Sylow $p$-subgroup $P$ of $G$ is normal in $G$, has order $p^\alpha$ and acts semiregularly on $V(\Gamma)$.
%If $\bar{\Gamma} = \Gamma/P$ and $\bar{G} = G/P$, then the natural projection $\Gamma \to \bar{\Gamma}$ is a regular covering projection,
%the order of $\bar{\Gamma}$ is $k$ and
%$\bar{G}$ is an arc-transitive and locally $L$ group of automorphisms of $\bar{\Gamma}$. In particular, the valency of $\bar{\Gamma}$ is the same as that
%of $\Gamma$.
%Moreover, one of the following holds:
%\begin{itemize}
%\item[{\rm (I)}]
%$P$ is cyclic, $p^\alpha \equiv 1\> {\rm mod}\> 2d$, $L\cong C_d$, $|\bar{G}| = kd$ and $\bar{G}/[\bar{G},\bar{G}] \cong C_{2d}$.
%\item[{\rm (II)}]
%$\alpha =2$, $P$ is elementary abelian of order $p^2$ and $L\cong C_d$  or $D_d$.
%\end{itemize}
%\end{theorem}

Theorem~\ref{thm:fg} will be proved in Section~\ref{sec:proof-thmfg}, 
after some further background is given in Section~\ref{sec:further}, 
and Theorem~\ref{thm:f2} is proved in Section~\ref{sec:proof-thmf2}. 
Then in Section~\ref{sec:constructions} we describe a means for constructing 
examples of symmetric graphs of order $kp$ for a given positive integer $k$ 
and variable prime $p$, under certain conditions, 
and finally, we consider the special case of symmetric cubic graphs 
in Section~\ref{sec:3valentcase}. 

\smallskip
Before continuing, we comment on the assumptions made in Theorem~\ref{thm:fg} 
about quasiprimitivity and graph-restrictivness of the group $L$.

The importance of quasiprimitivity of the group $G_v^{\,\Gamma(v)}$ for a $G$-arc-transitive 
graph $\Gamma$ was first observed by Cheryl Praeger in \cite{Pqp}, 
after noticing that such pairs $(\Gamma,G)$ behave nicely with regard to 
taking a quotient $\Gamma/N$ by a normal subgroup $N$ of $G$. 
Local quasiprimitivity has now become a standard assumption in many applications 
of `quotienting' techniques. 

A classical topic in algebraic graph theory is the question whether the order of 
a vertex-stabiliser $G_v$ for a connected $d$-valent $G$-arc-transitive graph $\Gamma$ 
can be bounded by an absolute constant (depending only on $d$). 
A famous instance is the theorem of Tutte that gives $|G_v| \le 48$ when $d = 3$.
For larger $d$, the boundedness of $|G_v|$ depends not only on $d$, 
but also on the permutation group $G_v^{\,\Gamma(v)}$;  
for example, when $d=4$ the order of $G_v$ can be bounded by a constant 
provided that $G_v^{\,\Gamma(v)} \cong \ZZ_2^2$, $\ZZ_4$, $A_4$ or $S_4$. To capture this
phenomenon, the term  {\em graph-restrictiveness} was coined by Gabriel Verret in \cite{Verret}.

Using Verret's terminology, one can easily express several classical results and 
conjectures in a different way. 
For example, Tutte's theorem says that the two transitive groups of degree $3$, 
namely $C_3$ and $S_3$, are graph restrictive, with corresponding constants
$c(C_3) = 3$ and $c(S_3) = 48$. 
Similarly, it can be deduced from the work of Gardiner~\cite{Gard} that the alternating 
group $A_4$ and the symmetric group $S_4$ (both of degree $4$) are
graph restrictive, with corresponding constants $c(A_4) = 36$ and $c(S_4)=2^{4\,}3^{6}$. 

An even stronger theorem holds, thanks to work by Richard Weiss and Vladimir Trofimov, 
namely that every doubly transitive group is graph-restrictive. 
The proof of this fact can be found by putting together pieces from many papers, 
but a nice summary is given in the introduction to a later paper by Weiss \cite{dt}.

Also when this is taken together with another theorem proved in \cite{weissp}, 
it implies that every transitive permutation group of prime degree is graph-restrictive.
Other examples of graph-restrictive groups can be found in \cite{Verret,Verret2}, 
and a summary of all known graph-restrictive groups is given in \cite{PSV}. 
In particular, it is shown in \cite{PSV} that if $L$ is any transitive permutation group 
of degree at most~$8$, then $L$ is graph-restrictive if and only if  every normal subgroup 
of $L$ is either transitive or semiregular.

We conclude this discussion of graph-restrictiveness by pointing out two related 
conjectures. 
The first is the `Weiss conjecture', made by Richard Weiss \cite[Conjecture 3.12]{weissconj}; 
this can be re-worded to say that every primitive permutation group is graph-restrictive. 
The second is due to Cheryl Praeger \cite{PConj}, and essentially states that every 
quasiprimitive permutation group is graph-restrictive. 
Note that in view of the Praeger conjecture, the condition in Theorem~\ref{thm:fg}  
on graph-restrictiveness might very well not be needed, since it would 
follow automatically from quasiprimitivity.

%%%%%%%%%%%%%%%
 \section{Further background} 
 \label{sec:further}
%%%%%%%%%%%%%%%

We begin this section with a classical property of quotients of locally quasiprimitive graphs,
which will be used frequently in the proofs of our theorems.

\begin{lemma}
\label{lem:Cheryl}
\cite[Section 1]{Pqp}.
Let $\Gamma$ be a connected $G$-arc-transitive graph, let $N$ be a normal subgroup of  $G$,
and take $\bar{G}=G/N$ and $\bar{\Gamma}=\Gamma/N$, and $\bar{v} = v^N$ for each vertex $v$ of $\Gamma$. 
Then there is a natural $($but not necessarily faithful\,$)$ action of $\bar{G}$ on $\bar{\Gamma}$ 
as an arc-transitive group of automorphisms.
If also $G_v^{\,\Gamma(v)}$ is quasiprimitive and $N$ has at least $3$ orbits on $V(\Gamma)$,
then $N$ is semiregular on $\Gamma$, and the action of $\bar{G}$ on $\bar{\Gamma}$ is faithful.
Moreover, if $G$ is locally primitive, then the natural projection $\Gamma \to \bar{\Gamma}$ is a regular covering projection, and the groups $G_v^{\,\Gamma(v)}$ and $\bar{G}_{\bar{v}}^{\,\bar{\Gamma}(\bar{v})}$ 
are permutation isomorphic.
\end{lemma}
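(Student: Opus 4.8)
The plan is to establish the three assertions in sequence, each building on the previous one, with quasiprimitivity of $G_v^{\,\Gamma(v)}$ entering only at the second stage. For the first assertion, I would observe that since $N\norml G$, the action of $G$ on $V(\Gamma)$ permutes the $N$-orbits, because $(v^N)^g=(v^g)^N$ for every $g\in G$ (using $Ng=gN$). This yields an action of $G$ on $V(\bar{\Gamma})$ that preserves adjacency, and since $N$ lies in its kernel, the action descends to $\bar{G}=G/N$. To see arc-transitivity, I would note that every arc of $\bar{\Gamma}$ lifts to an arc of $\Gamma$ (an edge between two blocks arises from an edge of $\Gamma$), so $G$-arc-transitivity of $\Gamma$ projects onto $\bar{G}$-arc-transitivity of $\bar{\Gamma}$. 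No quasiprimitivity is needed here, and faithfulness is deliberately not claimed.

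For the second assertion, the key device is the subgroup $N_v^{\,\Gamma(v)}$ induced by $N_v$ on the neighbourhood, which is normal in $G_v^{\,\Gamma(v)}$. Since $G_v^{\,\Gamma(v)}$ is quasiprimitive, this subgroup is either trivial or transitive. I would rule out transitivity as follows: $N$ fixes each of its own orbits setwise, so $N_v$ preserves the partition of $\Gamma(v)$ according to which $N$-orbit each neighbour lies in; transitivity of $N_v$ on $\Gamma(v)$ would then force all neighbours of $v$ into a single block, making $\bar{\Gamma}$ a connected $1$-valent graph, i.e.\ $K_2$, contrary to $N$ having at least $3$ orbits. Hence $N_v^{\,\Gamma(v)}$ is trivial, so $N_v\le G_v^{\,[1]}$; and by vertex-transitivity together with connectivity (propagating the fact that each $N_w$ fixes $\Gamma(w)$ pointwise along paths emanating from $v$) I conclude $N_v=1$, so $N$ is semiregular. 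For faithfulness, I would apply the same quasiprimitive/block argument to the kernel $K\norml G$ of the action on $\bar{\Gamma}$ to obtain $K_v=1$; then $K$ is semiregular with the same orbits as $N$, whence $|K|=|v^K|=|v^N|=|N|$, and combined with $N\le K$ this gives $K=N$.

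For the third assertion, assuming now that $G$ is locally primitive, I would examine the same $G_v$-invariant partition of $\Gamma(v)$ by $N$-orbits. Primitivity of $G_v^{\,\Gamma(v)}$ forces this partition to be trivial in one of two ways: either all parts are singletons, meaning distinct neighbours of $v$ lie in distinct $N$-orbits, so that $\wp$ restricts to a bijection $\Gamma(v)\to\bar{\Gamma}(\bar{v})$; or there is a single part, which again yields $\bar{\Gamma}=K_2$ and contradicts the three-orbit hypothesis. The first alternative is precisely the statement that $\wp$ is a regular covering projection. Finally, since $N_v=1$ the natural homomorphism $G_v\to\bar{G}_{\bar{v}}$ is an isomorphism (surjectivity coming from lifting block-stabilising elements and adjusting by a suitable element of $N$), and it is compatible with the bijection $\Gamma(v)\to\bar{\Gamma}(\bar{v})$, giving the asserted permutation isomorphism between $G_v^{\,\Gamma(v)}$ and $\bar{G}_{\bar{v}}^{\,\bar{\Gamma}(\bar{v})}$.

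The main obstacle is the semiregularity step: converting the local information ``$N_v^{\,\Gamma(v)}$ is trivial, hence not transitive'' into the global conclusion $N_v=1$ rests essentially on connectivity of $\Gamma$, and the clean trivial-or-transitive dichotomy is exactly what quasiprimitivity supplies. The block-and-valency argument eliminating the transitive case, together with its reuse for both semiregularity and faithfulness, is the conceptual core; everything else is routine bookkeeping with quotient actions.
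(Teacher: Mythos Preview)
Your argument is correct and is essentially the standard proof of this result. The paper does not supply its own proof of this lemma but simply cites it from Praeger \cite[Section~1]{Pqp}, so there is no in-paper argument to compare against; what you have written is the expected line of reasoning behind that citation.
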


Lemma~\ref{lem:Cheryl} has the following easy consequence:

\begin{lemma}
\label{lem:qpquo}
Let $\Gamma$ be a connected $G$-arc-transitive graph, and let $N$ be a normal subgroup 
of $G$. If $G_v^{\,\Gamma(v)}$ is quasiprimitive and $N_v$ is non-trivial, 
then $N$ has at most two orbits on $V(\Gamma)$. 
\end{lemma}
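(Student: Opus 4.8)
The plan is to derive this directly from Lemma~\ref{lem:Cheryl} by contraposition. The hypotheses here—$\Gamma$ connected and $G$-arc-transitive, $N$ normal in $G$, and $G_v^{\,\Gamma(v)}$ quasiprimitive—are precisely those feeding into the ``if also'' clause of Lemma~\ref{lem:Cheryl}, so the entire work has already been done there; what remains is to read off the correct logical direction.

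Concretely, I would argue as follows. Suppose, for a contradiction, that $N$ has at least three orbits on $V(\Gamma)$. Since $\Gamma$ is connected and $G$-arc-transitive, and $G_v^{\,\Gamma(v)}$ is quasiprimitive, Lemma~\ref{lem:Cheryl} applies and tells us that $N$ acts semiregularly on $\Gamma$. By definition, a semiregular permutation group has trivial point-stabilisers, so $N_v = 1$ for every vertex $v$. This directly contradicts the hypothesis that $N_v$ is non-trivial. Hence our assumption was false, and $N$ must have at most two orbits on $V(\Gamma)$.

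There is essentially no obstacle to overcome: the statement is the contrapositive of the semiregularity conclusion of Lemma~\ref{lem:Cheryl}, with the single point needing care being that the threshold ``at least three orbits'' in Lemma~\ref{lem:Cheryl} translates exactly to the bound ``at most two orbits'' claimed here. The quasiprimitivity of $G_v^{\,\Gamma(v)}$ is used only insofar as it is a hypothesis of Lemma~\ref{lem:Cheryl}; no further structural analysis of the local action is required. I expect the proof to occupy only two or three sentences once written out in full.
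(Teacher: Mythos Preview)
Your proposal is correct and is exactly the approach the paper takes: the paper does not even write out a proof, but simply introduces the lemma as an ``easy consequence'' of Lemma~\ref{lem:Cheryl}, meaning precisely the contrapositive argument you spell out.
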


We now give some other background theory that will be useful. 

\begin{lemma}
\label{lem:locallyC}
Suppose $\Gamma$ is a connected $G$-arc-transitive graph of valency $d$, and  
$G_v^{\,\Gamma(v)}$ is a cyclic group of order $d$. Then $G$ acts regularly on the arcs of
$\,\Gamma,$ and is generated by two elements of orders $d$ and $2$, such that the 
element of order $d$ generates $G_v$.
\end{lemma}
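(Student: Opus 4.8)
The plan is to first pin down the local action, then prove that arc-stabilisers are trivial, and finally produce the two generators. Since $\Gamma$ is $G$-arc-transitive, $G_v^{\Gamma(v)}$ is transitive on the $d$ neighbours of $v$; being cyclic of order $d$, it is therefore \emph{regular} on $\Gamma(v)$. In particular, for any arc $(v,w)$ the stabiliser of the point $w$ in $G_v^{\Gamma(v)}$ is trivial, so the arc-stabiliser $G_{vw}=G_v\cap G_w$ coincides with the kernel $G_v^{[1]}$ of the action of $G_v$ on $\Gamma(v)$.

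The crucial step is to show that $G_v^{[1]}=\id$. I would take $s\in G_v^{[1]}$, so that $s$ fixes $v$ and every neighbour of $v$. For any neighbour $w$ of $v$, the element $s$ lies in $G_w$ and fixes the point $v\in\Gamma(w)$; since $G_w^{\Gamma(w)}\cong C_d$ acts regularly, $s$ must induce the identity on $\Gamma(w)$, i.e.\ it fixes every neighbour of $w$ as well. Propagating this along paths and invoking the connectedness of $\Gamma$, I conclude that $s$ fixes every vertex, whence $s=\id$. Thus $G_v^{[1]}=\id$, the arc-stabiliser $G_{vw}$ is trivial, and $G$ acts regularly on $A(\Gamma)$. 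It follows that $G_v\cong G_v^{\Gamma(v)}\cong C_d$; let $a$ be a generator of $G_v$, an element of order $d$.

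It remains to produce an involution and to establish generation. By arc-transitivity there is some $g\in G$ with $(v,w)^g=(w,v)$; then $g^2$ fixes both $v$ and $w$, hence fixes the arc $(v,w)$, and since arc-stabilisers are trivial, $g^2=\id$. As $g$ interchanges the distinct vertices $v$ and $w$ it is not the identity, so $g$ has order $2$. Now I would set $H=\langle a,g\rangle$ and argue $H=G$: since $G_v=\langle a\rangle\le H$ is transitive on $\Gamma(v)$, the orbit $v^H$ contains $\Gamma(v)$, and whenever $u\in v^H$ the set $\Gamma(u)$ is an $H$-image of $\Gamma(v)$ and so also lies in $v^H$; by connectedness $v^H=V(\Gamma)$, so $H$ is vertex-transitive. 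Because $G_v\le H$ we have $H_v=G_v$, and comparing orders via $|H|=|H_v|\,|v^H|=|G_v|\,|V(\Gamma)|=|G|$ forces $H=G$. This exhibits $G$ as generated by $a$ (of order $d$, generating $G_v$) and $g$ (of order $2$), as required.

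I expect the main obstacle to be the triviality of $G_v^{[1]}$ in the second paragraph. The propagation along connected paths is routine in spirit, but it is precisely the place where the \emph{regularity} (and not merely transitivity) of the local cyclic group is used: at each step one needs that an automorphism fixing $w$ and one of its neighbours fixes \emph{all} of them, which is exactly the semiregularity of $C_d$ acting on $\Gamma(w)$. Once this is in hand, the involution and the generation argument are standard consequences of arc-transitivity and connectedness.
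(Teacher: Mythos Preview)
Your proof is correct and follows essentially the same route as the paper's. The paper is simply more terse: it invokes connectivity to conclude $G_v^{[1]}=1$ in one line, and states without further argument that $G=\langle G_v,\tau\rangle$ for an arc-reversing element $\tau$; you have spelled out both of these standard facts explicitly.
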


\begin{proof}
First, since $G_v^{\,\Gamma(v)}$ acts regularly on $\Gamma(v)$ for all $v$, the connectivity 
of $\Gamma$ implies that the kernel of the action of $G_v$ on $\Gamma(v)$ is trivial, 
and so $G_v \cong G_v^{\,\Gamma(v)}$. 
In particular, $G$ acts regularly on the arcs of $\Gamma$.
Next, by arc-transitivity, $G$ is generated by $G_v$ and an element $\tau$ 
that interchanges $v$ with one of its neighbours, say $w$.
Then $\tau^2$ stabilises the arc $(v,w)$, so $\tau^2=1$. 
Hence $G$ is generated by an element of order $d$ (generating $G_v$) 
and this element $\tau$ of order $2$.
\end{proof}

\smallskip

 \begin{lemma}
 \label{lem:bur}
Let $G$ be a quasiprimitive permutation group. 
Then $G$ contains at most two minimal normal subgroups, 
and its socle $M=\soc(G)$ is a direct product of pairwise isomorphic simple groups.
Furthermore, if $G$ is soluble, then $G$ is primitive of affine type$\,;$  in other words, 
$M$ is the only minimal normal subgroup of $G$, and is isomorphic to
an elementary abelian group $\ZZ_p^d$, and then $G$ is permutation isomorphic
 to a subgroup of the affine group $\AGL(d,p)\cong \ZZ_p^d \rtimes \GL(p,d)$
  in its natural action on the vector space $\ZZ_p^d$.
 \end{lemma}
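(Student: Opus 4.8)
The plan is to build everything on one elementary but crucial observation: if $N$ is a transitive normal subgroup of a permutation group $G$ acting on a set $\Omega$, then its centraliser $C_G(N)$ is semiregular. Indeed, if $c\in C_G(N)$ fixes a point $\omega$, then for any point $\alpha=\omega^n$ (with $n\in N$, using transitivity of $N$) we get $\alpha^c=\omega^{nc}=\omega^{cn}=\omega^n=\alpha$, so $c$ fixes every point and hence $c=1$. I would record this first, together with two standard facts to be used repeatedly: distinct minimal normal subgroups $M_1,M_2$ of any finite group meet trivially (so $[M_1,M_2]\le M_1\cap M_2=1$ and each centralises the other), and every minimal normal subgroup is characteristically simple, hence a direct product of pairwise isomorphic simple groups.

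For the bound on the number of minimal normal subgroups, I would argue as follows. Since $G$ is quasiprimitive, every minimal normal subgroup is transitive. If $M_1\ne M_2$ are minimal normal, then $M_2\le C_G(M_1)$ is semiregular by the observation above, and being also transitive it is regular; by symmetry every minimal normal subgroup is regular, of order exactly $|\Omega|$. If there were three distinct minimal normal subgroups $M_1,M_2,M_3$, then $M_2$ and $M_3$ would both centralise $M_1$ and meet trivially, so $M_2\times M_3\le C_G(M_1)$ would be a semiregular subgroup of order $|\Omega|^2>|\Omega|$, which is impossible since a semiregular group has order at most $|\Omega|$. Hence $G$ has at most two minimal normal subgroups. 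For the socle, if there is a single one $M$ then $M=\soc(G)$ and the claim is just characteristic simplicity; if there are two, $M_1$ and $M_2$, I would identify $\Omega$ with $M_1$ via its regular action, so that $C_{\Sym(\Omega)}(M_1)$ is the opposite (left) regular representation, isomorphic to $M_1$ and of order $|\Omega|$. As $M_2\le C_G(M_1)\le C_{\Sym(\Omega)}(M_1)$ with $|M_2|=|\Omega|$, this forces $M_2\cong M_1$, so the simple factors of $M_1$ and $M_2$ are mutually isomorphic and $\soc(G)=M_1\times M_2$ is a direct product of pairwise isomorphic simple groups.

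Finally, for the soluble case I would use that a minimal normal subgroup of a soluble group is elementary abelian, say $M_1\cong\ZZ_p^{\,d}$. An abelian transitive group is regular, so $M_1$ is regular and, being abelian, satisfies $M_1\le C_G(M_1)$; since $C_G(M_1)$ is semiregular and contains the transitive $M_1$, it is regular of order $|\Omega|$, whence $C_G(M_1)=M_1$. A second minimal normal subgroup would lie in $C_G(M_1)=M_1$ and hence equal it, so $M=M_1=\soc(G)$ is the unique minimal normal subgroup. Identifying $\Omega$ with the vector space $V=\ZZ_p^{\,d}$ on which $M$ acts by translations, regularity gives $|\Omega|=p^d$ and $G=M\rtimes G_\omega$ with $G_\omega$ acting faithfully on $V$ as a subgroup of $\Aut(M)=\GL(d,p)$, so $G\le\AGL(d,p)$. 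To see that $G$ is primitive, I would note that any $G_\omega$-invariant subspace $W\le V$ is normalised by $G_\omega$ and (as $M$ is abelian) by $M$, hence normal in $G$; by quasiprimitivity $W$ is transitive, forcing $W=V$ since $M$ is regular. Thus $G_\omega$ acts irreducibly on $V$ and $G$ is primitive of affine type.

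I expect the main obstacle to be the socle-isomorphism step in the two-minimal-normal-subgroup case: the cleanest route is the identification of $C_{\Sym(\Omega)}(M_1)$ with the opposite regular representation, and I would need the orders to match exactly so that $M_2$ fills out the entire centraliser. Everything else is a short deduction from the semiregularity of centralisers of transitive normal subgroups together with the standard structure of characteristically simple groups.
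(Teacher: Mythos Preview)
Your argument is correct and self-contained. The paper, by contrast, gives no argument at all: its entire proof reads ``This follows directly from the first three paragraphs of \cite[Section 3]{Cheryl}'', i.e.\ it defers to Praeger's O'Nan--Scott paper for quasiprimitive groups. So there is no approach to compare against beyond the citation itself.

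What your route buys is independence from that reference: everything is reduced to the single observation that the centraliser in $\Sym(\Omega)$ of a transitive subgroup is semiregular, together with the standard facts that distinct minimal normal subgroups centralise one another and that characteristically simple groups are direct powers of a simple group. The counting argument ruling out three minimal normal subgroups is clean, and the identification $C_{\Sym(\Omega)}(M_1)\cong M_1$ via the opposite regular representation handles the two-minimal-normal case neatly; since $|M_2|=|\Omega|=|C_{\Sym(\Omega)}(M_1)|$ the orders do match exactly, so the concern you flag as a possible obstacle does not materialise. In the soluble case your deduction that $C_G(M_1)=M_1$, hence uniqueness of the minimal normal subgroup, and the passage from irreducibility of $G_\omega$ on $V$ to maximality of $G_\omega$ (and hence primitivity) are both standard and correctly stated. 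The only cosmetic point is that when you invoke quasiprimitivity on a $G_\omega$-invariant subspace $W$ you should note explicitly that the case $W=0$ is harmless and that for $W\neq 0$ transitivity forces $W=V$ because $W$ is contained in the regular group $M$.
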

 
 \begin{proof}
 This follows directly from the  first three paragraphs of  \cite[Section 3]{Cheryl}.
 \end{proof}
 
For a prime $p$ and a group $G$, let $O_p(G)$ denote the largest normal $p$-subgroup of $G$. 
Note that $O_p(H) \norml O_p(G)$ whenever $H\norml G$.
 
 \begin{lemma}
 \label{lem:Op}
 Let $\Gamma$ be a connected $G$-arc-transitive graph, 
 let $uv$ be an arc of $\Gamma$, and let $p$ be any prime.
 If $G_v$ contains a non-trivial  $p$-group $C$ as a normal subgroup,
 then either $C^{\,\Gamma(v)} \not = 1$  or $O_p(G_{uv}^{\,\Gamma(v)}) \not = 1$.
 \end{lemma}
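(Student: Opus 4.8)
The plan is to replace the given subgroup by the canonical normal $p$-subgroup $Q_v:=O_p(G_v^{[1]})$ and then to force a contradiction by propagating along the edges of $\Gamma$. First I would dispose of the easy alternative: if $C^{\Gamma(v)}\neq1$ there is nothing to prove, so assume $C^{\Gamma(v)}=1$, that is, $C\le G_v^{[1]}$. Since $C\norml G_v$ and $C\le G_v^{[1]}\le G_v$, the subgroup $C$ is a normal $p$-subgroup of $G_v^{[1]}$, so $C\le O_p(G_v^{[1]})=Q_v$ and in particular $Q_v\neq1$; being characteristic in $G_v^{[1]}\norml G_v$, the group $Q_v$ is normal in $G_v$, and it lies in $G_v^{[1]}$. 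As $Q_v$ fixes $u$ we get $Q_v\le G_{uv}$ and $Q_v\norml G_{uv}$. Finally I would note that, by arc-transitivity, some element of $G$ interchanges $u$ and $v$; it normalises $G_{uv}$ and induces a permutation isomorphism between the actions of $G_{uv}$ on $\Gamma(u)$ and on $\Gamma(v)$, so the statement $O_p(G_{uv}^{\Gamma(v)})\neq1$ is unchanged if $\Gamma(v)$ is replaced by $\Gamma(u)$.

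Next I would split on the action of $Q_v$ on $\Gamma(u)$. If $Q_v^{\Gamma(u)}\neq1$, then the image of the normal subgroup $Q_v\norml G_{uv}$ under restriction to $\Gamma(u)$ is a non-trivial normal $p$-subgroup of $G_{uv}^{\Gamma(u)}$, so $O_p(G_{uv}^{\Gamma(u)})\neq1$, and the swap from the previous paragraph upgrades this to $O_p(G_{uv}^{\Gamma(v)})\neq1$, as required. The whole content of the lemma is thus to rule out the complementary case $Q_v^{\Gamma(u)}=1$, and I expect this propagation to be the main obstacle.

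To eliminate that case I would set $Q_x:=O_p(G_x^{[1]})$ for every vertex $x$; by vertex-transitivity the $Q_x$ are conjugate, hence all of the same order. The key local fact is that $G_y^{[1]}\le G_x$ for adjacent $x,y$, because an automorphism fixing $\Gamma(y)$ pointwise fixes its member $x$. I would then prove by induction on $d(v,x)$ that $Q_v\le G_x^{[1]}$ and $Q_v=Q_x$ for every $x$. The start uses $Q_v^{\Gamma(u)}=1$ together with $Q_v\norml G_v$ and the transitivity of $G_v$ on $\Gamma(v)$ to give $Q_v\le G_w^{[1]}$ for every neighbour $w$ of $v$; then $G_w^{[1]}\le G_v$ makes $Q_v$ a normal $p$-subgroup of $G_w^{[1]}$, so $Q_v\le O_p(G_w^{[1]})=Q_w$, and equality of orders gives $Q_v=Q_w$. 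The inductive step repeats this at a vertex $x$ with $Q_x=Q_v$, first spreading $Q_v\le G_y^{[1]}$ to all $y\in\Gamma(x)$ and then using $G_y^{[1]}\le G_x$ to deduce $Q_v\norml G_y^{[1]}$, whence $Q_v\le O_p(G_y^{[1]})=Q_y$ and $Q_v=Q_y$. Since the induction shows that $Q_v$ fixes every neighbourhood pointwise, it fixes every vertex, forcing $Q_v=1$ and contradicting $Q_v\neq1$. The delicate point throughout is the transport of normality away from $v$: a priori $Q_v$ is normal only in $G_v$, and what lets the induction advance one edge is precisely the inclusion $G_y^{[1]}\le G_x$ combined with the canonicity of $O_p$, which turns each containment $Q_v\le Q_x$ into an equality.
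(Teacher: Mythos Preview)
Your argument is correct, but it differs from the paper's in an interesting way. You propagate the subgroup $Q_v=O_p(G_v^{[1]})$ along paths: after reducing to the case $Q_v^{\Gamma(u)}=1$ via the arc-reversing element, you show by induction on distance that $Q_v=Q_x$ for every vertex $x$, forcing $Q_v=1$. This is a Thompson--Wielandt style ``spreading'' argument, and the delicate point you identify --- that $G_y^{[1]}\le G_x$ together with the canonicity of $O_p$ converts each containment into an equality --- is exactly what makes it go through.

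The paper instead argues purely at the single edge $uv$. Assuming both $C^{\Gamma(v)}=1$ and $O_p(G_{uv}^{\Gamma(v)})=1$, it observes that the second assumption forces $O_p(G_{uv})\le G_v^{[1]}$, and then, since $G_v^{[1]}\norml G_{uv}$, one gets $O_p(G_{uv})=O_p(G_v^{[1]})$. This common group is therefore characteristic in both $G_{uv}$ and $G_v^{[1]}$, hence normal in both the edge-stabiliser $G_{\{u,v\}}$ and in $G_v$; connectedness gives $\langle G_v,G_{\{u,v\}}\rangle=G$, so the group is normal in $G$, an immediate contradiction. This avoids your swap between $\Gamma(u)$ and $\Gamma(v)$ and the inductive propagation entirely: the global step is replaced by the single identity $G=\langle G_v,G_{\{u,v\}}\rangle$. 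Your route is slightly longer but arguably more transparent about how the triviality spreads through the graph; the paper's is quicker and keeps everything local to one arc.
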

 
 \begin{proof}
 Let $C$ be any non-trivial normal $p$-group of $G_v$, and suppose that $C^{\,\Gamma(v)} = 1$. 
 Then $C\le G_v^{[1]} \le G_{uv}$,
 and since $C$ is normal in $G_v$, it is follows that $C$ is normal both in $G_{uv}$ and $G_v^{[1]}$. In particular,
 $O_p(G_v^{[1]})$ and $O_p(G_{uv})$ are non-trivial. 
 Then since $G_v^{[1]} \norml G_{uv}$, it follows that $ O_p(G_v^{[1]}) \norml O_p(G_{uv})$.
 
Now suppose also that $O_p(G_{uv}^{\,\Gamma(v)}) = 1$. 
Then $O_p(G_{uv})$ is a (normal) subgroup of $G_v^{[1]}$, 
and since $ O_p(G_v^{[1]}) \norml O_p(G_{uv})$, it follows that $O_p(G_{uv}) = O_p(G_v^{[1]})$.
Hence $O_p(G_{uv})$ is a characteristic subgroup of $G_{uv}$ as well as one of $G_v^{[1]}$. 
Then since $G_{uv}$ is normal (of index $2$) in the
edge-stabiliser $G_{\{u,v\}}$ and $G_v^{[1]}$ is normal in $G_v$, 
we find that $O_p(G_{uv})$ is normal in $\langle G_v, G_{\{u,v\}} \rangle = G$,  
with the latter equality following from connectedness of $G$. 
But then $O_p(G_{uv})$ acts trivially on the arcs of $\Gamma$, which contradicts
the fact that $O_p(G_{uv}) \not = 1$. 
This shows that $O_p(G_{uv}^{\,\Gamma(v)}) \not= 1$, as required.
 \end{proof}

 \smallskip
 
\begin{lemma} 
\label{lem:leash}
Let $\Gamma$ be a finite connected $G$-vertex-transitive graph. 
Then every simple section of $G_v$ is also a section of $G_v^{\,\Gamma(v)}$.
In particular, if $G_v^{\,\Gamma(v)}$ is soluble, then so is $G_v$, 
and if the prime $p$ does divides $|G_v|$, then also $p$ divides $|G_v^{\,\Gamma(v)}|$. 
 \end{lemma}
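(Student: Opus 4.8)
The plan is to reduce both `in particular' statements to the main assertion about simple sections, and then to prove that assertion by an inductive `push along a path' argument. For the reductions: if $G_v^{\,\Gamma(v)}$ is soluble then all of its simple sections are cyclic of prime order, so the main assertion forces every composition factor of $G_v$ to be cyclic, whence $G_v$ is soluble; and if a prime $p$ divides $|G_v|$, then Cauchy's theorem gives a subgroup $\ZZ_p \le G_v$, which is a simple section, so by the main assertion $\ZZ_p$ is a section of $G_v^{\,\Gamma(v)}$ and therefore $p$ divides $|G_v^{\,\Gamma(v)}|$.

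To prove the main assertion, I would introduce the descending chain $G_v = G_v^{[0]} \ge G_v^{[1]} \ge G_v^{[2]} \ge \cdots$, where $G_v^{[i]}$ is the pointwise stabiliser in $G_v$ of the set of vertices at distance at most $i$ from $v$; this extends the notation $G_v^{[1]}$ already in use. Each $G_v^{[i]}$ is normal in $G_v$ (since $G_v$ preserves distance from $v$), and by connectedness and finiteness of $\Gamma$ we have $G_v^{[i]} = 1$ once the ball of radius $i$ exhausts $V(\Gamma)$, because an automorphism fixing every vertex is the identity. The key observation is that the action of $G_v^{[i]}$ on the neighbourhoods $\Gamma(w)$ of the vertices $w$ at distance exactly $i$ from $v$ has kernel precisely $G_v^{[i+1]}$, so that $G_v^{[i]}/G_v^{[i+1]}$ embeds into the direct product $\prod_{w} G_w^{\,\Gamma(w)}$ taken over such $w$.

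With this in hand, the engine is a dichotomy applied to a simple section $S \cong A/B$ with $B \norml A \le G_v^{[i]}$. Since $G_v^{[i+1]} \norml G_v$, the subgroup $A \cap G_v^{[i+1]}$ is normal in $A$, so $(A\cap G_v^{[i+1]})B/B$ is normal in the simple group $S$ and is therefore either trivial or all of $S$. In the first case $S$ is a quotient of $A/(A\cap G_v^{[i+1]})$, hence a simple section of the direct product $\prod_w G_w^{\,\Gamma(w)}$; in the second case $S$ is a section of $A \cap G_v^{[i+1]} \le G_v^{[i+1]}$. Thus either $S$ descends to a simple section of $G_v^{[i+1]}$, or it already appears as a simple section of that product. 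Starting from $S$ as a simple section of $G_v = G_v^{[0]}$ and iterating, the descending alternative cannot persist indefinitely, since $G_v^{[i]}$ eventually vanishes and $S$ is nontrivial; hence at some stage $S$ is a simple section of $\prod_w G_w^{\,\Gamma(w)}$.

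Two facts then finish the argument, and I expect the first to be the main technical point. The first is that a simple section of a finite direct product $\prod_w H_w$ is necessarily a section of one of the factors $H_w$; this follows by induction on the number of factors, splitting off one coordinate and once more using the simplicity of $S$ to force a dichotomy between the two natural normal subgroups arising from the projection. The second is that $G_w^{\,\Gamma(w)} \cong G_v^{\,\Gamma(v)}$ for every $w$, which is immediate from vertex-transitivity by conjugating by an element carrying $v$ to $w$. Combining these yields that $S$ is a section of $G_v^{\,\Gamma(v)}$, as required. The only delicate points to verify carefully are the normality claims used to invoke simplicity of $S$, and the identification of the kernel of the neighbourhood action as exactly $G_v^{[i+1]}$; everything else is routine bookkeeping.
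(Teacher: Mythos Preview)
Your proof is correct. The paper itself does not give a proof of this lemma at all: it simply remarks that the result is ``almost folklore'' and refers the reader to \cite{leash} for a more general version. So there is nothing to compare against line by line; you have supplied precisely the standard argument that justifies the ``folklore'' label.

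For the record, the argument you sketch is the expected one and all the steps check out. The chain $G_v = G_v^{[0]} \ge G_v^{[1]} \ge \cdots$ of pointwise ball-stabilisers is the usual filtration; each term is normal in $G_v$ because $G_v$ preserves distance from $v$; and connectivity plus finiteness force the chain to terminate at $1$. The identification of $G_v^{[i+1]}$ as the kernel of the action of $G_v^{[i]}$ on $\bigcup_{d(v,w)=i}\Gamma(w)$ is exactly right, so $G_v^{[i]}/G_v^{[i+1]}$ embeds in $\prod_w G_w^{\,\Gamma(w)}$ as you say. Your dichotomy step using simplicity of $S$ is clean (the normality of $A\cap G_v^{[i+1]}$ in $A$ is immediate from $G_v^{[i+1]}\norml G_v$), and the reduction ``a simple section of a finite direct product is a section of some factor'' by peeling off one coordinate at a time is also standard and correct. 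Vertex-transitivity then lets you replace every $G_w^{\,\Gamma(w)}$ by $G_v^{\,\Gamma(v)}$. The two ``in particular'' statements follow from the main assertion exactly as you describe.
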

 
\begin{proof}
This is almost folklore, and a more general version can be found in \cite{leash}.
\end{proof}

In the proof of our main theorem, we will need the following fact about the subgroups of $\GL(2,p)$.
We thank Pablo Spiga for offering us a proof, which uses the fact 
that when the prime $p$ is congruent $\pm 1$ mod $5$, the group $\SL(2,p)$ 
contains subgroups isomorphic to $\SL(2,5)$, and all such subgroups are maximal 
(see \cite[p.\,417, Ex.\,7]{suzuki}).

\begin{lemma}
\label{lem:charT}
Let $p$ be a prime congruent $\pm 1$ modulo $5$,
let $G=\GL(2,p)$, %let $S$ be a subgroup of $G$ isomorphic to $\SL(2,p)$,
and let $T$ be the subgroup of order $2$ in $G$ generated by the negative identity matrix $-I_2$.
Also let $H$ be a subgroup of $G$ isomorphic to $\SL(2,5)$, 
and let $N$ be the normaliser of $H$ in $G$.
Then $T$ is a characteristic subgroup of $N$, 
and of every subgroup of $N$ containing $T$. 
\end{lemma}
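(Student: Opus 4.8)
The plan is to identify $T$ intrinsically inside $N$ as the unique involution of its centre, and to control that centre by means of Schur's lemma. First I would record that $H$ is perfect (being isomorphic to $\SL(2,5)=2.\Alt(5)$), so $\det(H)=1$ and hence $H\le\SL(2,p)$; consequently the only involution of $H$, namely the generator of its centre, coincides with the unique involution $-I_2$ of $\SL(2,p)$, so $Z(H)=T$. Next, the embedding $H\hookrightarrow\GL(2,p)$ is via one of the two faithful $2$-dimensional irreducible representations of $2.\Alt(5)$, which are realizable over $\mathbb F_p$ precisely because $p\equiv\pm1\pmod5$ (so that $\sqrt5\in\mathbb F_p$); in particular $H$ acts irreducibly, and Schur's lemma yields $C_G(H)=Z(G)$, the group of scalar matrices, cyclic of order $p-1$.

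Second, I would pin down $N=N_G(H)$. Intersecting with $\SL(2,p)$ and using the cited maximality of $\SL(2,5)$ in $\SL(2,p)$, together with the fact that $H$ is not normal in $\SL(2,p)$ (the only proper nontrivial normal subgroup of $\SL(2,p)$ is its centre, of order $2$), gives $N\cap\SL(2,p)=H$. To capture the whole normaliser, conjugation defines a homomorphism $N\to\Aut(H)\cong S_5$ with kernel $C_G(H)=Z(G)$; since conjugation by $g\in N$ carries the defining representation of $H$ to an isomorphic one, whereas the outer automorphism of $H$ interchanges the two inequivalent $2$-dimensional representations, the image must lie in $\mathrm{Inn}(H)\cong\Alt(5)$. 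Hence $N/Z(G)\cong\Alt(5)$ and $N=H\,Z(G)$, a central product of $H$ with the scalars amalgamated over $T=H\cap Z(G)$.

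Third, characteristicity in $N$ is then immediate. One has $Z(N)\le C_G(H)=Z(G)$ and clearly $Z(G)\le Z(N)$, so $Z(N)=Z(G)$ is cyclic of order $p-1$; as $p$ is odd this group has a unique subgroup of order $2$, namely $T$, which is therefore characteristic in $Z(N)$ and hence in $N$. (Equivalently, $N'=H$ and $T=N'\cap Z(N)$, both of which are characteristic.) The same mechanism is meant to handle a subgroup $M$ with $T\le M\le N$: since $-I_2$ is a scalar one always has $T\le Z(M)$, and as soon as $Z(M)$ is cyclic, $T$ is its unique involution and so is characteristic in $Z(M)$, and hence in $M$.

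The main obstacle is exactly the verification that $Z(M)$ is cyclic, i.e.\ that $M$ has no central involution other than $-I_2$, and this is where I expect the real work to lie. The clean case is $H\le M$, for then $Z(M)\le C_G(H)=Z(G)$ is cyclic and the argument above applies verbatim. In general I would exploit the central product structure $N=H\circ_T Z(G)$: writing an element as $\lambda h$ with $h\in H$ and $\lambda$ a scalar, the involutions of $N$ other than $-I_2$ are precisely those with $h$ of order $4$ and $\lambda^2=-1$, and a short computation shows these all have determinant $-1$ (eigenvalues $\{1,-1\}$). Tracking the determinant—equivalently, the projection $N\to N/Z(G)\cong\Alt(5)$—thus separates $-I_2$ (the unique determinant-$1$ involution) from the remaining central involutions, and I would use this to rule out a second involution in $Z(M)$ for the subgroups in play, thereby reducing every case to the cyclic-centre argument above. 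Controlling this determinant-$(-1)$ obstruction is the delicate step on which the proof turns.
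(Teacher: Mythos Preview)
Your argument for $N$ itself is sound and close in spirit to the paper's: both establish $N = Z(G)\cdot H$ --- the paper via an orbit count on the conjugates of $\SL(2,5)$ inside $\SL(2,p)$, you via Schur's lemma together with the observation that the outer automorphism of $\SL(2,5)$ swaps the two $2$-dimensional representations --- and from $N = Z(G)H$ the characteristicity of $T$ in $N$ follows. Your route through $Z(N)=Z(G)$ being cyclic is in fact cleaner than the paper's, which instead asserts that $-I_2$ is the \emph{only} involution in $N$; that assertion is false whenever $p\equiv 1\pmod 4$, exactly as your analysis of the elements $\lambda h$ with $\lambda^2=-1$ and $h$ of order $4$ reveals.

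The obstruction you flag for a general subgroup $M$ is not merely delicate but fatal: the lemma is false as stated. Take $p\equiv 1\pmod 4$ with $p\equiv\pm 1\pmod 5$ (for instance $p=29$), choose $\lambda\in\mathbb{F}_p$ with $\lambda^2=-1$ and $h\in H$ of order $4$, and set $M=\langle -I_2,\ \lambda h\rangle$. Then $M\cong C_2\times C_2$ is a subgroup of $N$ containing $T$, and $\Aut(M)\cong S_3$ permutes the three involutions transitively, so $T$ is not characteristic in $M$. Your proposed determinant separation cannot rescue this, because the determinant is extrinsic to $M$: an abstract automorphism of $M$ has no reason to respect it. Thus neither your outline nor the paper's argument can be completed for arbitrary $M\le N$ containing $T$; both founder at precisely the point you isolated, and any repair must exploit additional structure of the particular subgroups arising in the application (the arc-stabilisers $G_{uv}$) rather than the general claim.
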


\begin{proof}
Let $Z$ be the centre of $G$. We will show first that $N = ZH$.
Let $S = SL(2,p)$, and let $\cH$ be the set of all subgroups of $G$ isomorphic to $\SL(2,5)$. 
Now consider the action of $G$ on $\cH$ by conjugation. 
Then $G$ is transitive on $\cH$, while $S$ has two orbits on $\cH$, 
of equal size (since $S$ is normal in $G$); see for example \cite[p.\,416, Ex.\,2]{suzuki}.
The stabiliser of $H$ in $G$ is $N_G(H) = N$, while the stabiliser of $H$ is $S$ 
is $N_S(H) = H$, since $H$ is maximal but not normal in $S$. 
Hence by the orbit-stabiliser theorem, we have $|G| = |\cH||N|$ and $|S| = |\cH||H|/2$, 
and so $|Z| = p-1 = |G/S| = |G|/|S| = 2|N|/|H|$. 
This implies $|ZH| = |Z| |H| / |Z\cap H| = |Z| |H| /2 = |N|$, 
and then since $N$ contains both $Z$ and $H$, it follows that $N=ZH$.

Next, $H$ has just one involution, and this must be the unique involution 
in $S$, namely $-I_2$, which also lies in the central subgroup $Z$ of $ZH = N$, 
and hence must be the only involution in $N$. 
It follows that  the subgroup $T$ generated by this involution is
characteristic in $N$, and also in any subgroup of $ZH = N$ containing $T$.
\end{proof}

We will also use the following. 

\begin{lemma}
\label{lem:HK}
Let  $H$ and $K$ be normal subgroups of $G$, the orders of which are coprime. 
Then $HK\cap S=(H\cap S)(K\cap S)$ for every subgroup $S$ of $G$.
\end{lemma}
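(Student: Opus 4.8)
The plan is to prove the set-theoretic identity $HK \cap S = (H\cap S)(K\cap S)$ by establishing the two inclusions separately. The inclusion $(H\cap S)(K\cap S) \subseteq HK\cap S$ is immediate and requires no hypotheses: any element of the form $ab$ with $a\in H\cap S$ and $b\in K\cap S$ lies in $HK$ (since $a\in H$, $b\in K$) and also in $S$ (since $a,b\in S$ and $S$ is a subgroup). So the content is entirely in the reverse inclusion $HK\cap S \subseteq (H\cap S)(K\cap S)$, and this is where the coprimality and normality hypotheses must be used.

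For the reverse inclusion, first I would observe that since $H$ and $K$ are normal in $G$ with coprime orders, their intersection $H\cap K$ is trivial, and consequently every element of $HK$ has a \emph{unique} factorisation as a product $hk$ with $h\in H$ and $k\in K$. Now take an arbitrary $g\in HK\cap S$ and write $g = hk$ with $h\in H$, $k\in K$. The goal is to show that in fact $h\in S$ and $k\in S$ (which, together with $h\in H$ and $k\in K$, gives $h\in H\cap S$ and $k\in K\cap S$, exhibiting $g\in (H\cap S)(K\cap S)$). The key idea is to recover $h$ and $k$ individually as powers of $g$, using the coprimality of $|H|$ and $|K|$.

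More precisely, let $m = |H|$ and $n = |K|$, so $\gcd(m,n)=1$. Because $H$ and $K$ are normal with trivial intersection, they centralise each other, so $h$ and $k$ commute; hence $g = hk = kh$ and the order of $g$ divides $\mathrm{lcm}(m,n)=mn$, with $g^{mn}=1$. Since $\gcd(m,n)=1$, I would choose integers $x,y$ with $xm + yn = 1$ (Bezout), and then compute $g^{yn} = h^{yn}k^{yn} = h^{yn}$ (because $k^{n}=1$ forces $k^{yn}=1$) and similarly $g^{xm} = k^{xm}$. Using $xm+yn=1$ one checks $h^{yn} = h^{1-xm} = h$ (as $h^{m}=1$) and likewise $k^{xm}=k$. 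Thus $h = g^{yn}$ and $k = g^{xm}$ are both powers of $g$. Since $g\in S$ and $S$ is a subgroup, every power of $g$ lies in $S$, so $h,k\in S$, completing the reverse inclusion.

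I expect no serious obstacle here; the only point requiring a little care is justifying the uniqueness of the factorisation and the commuting of $h$ and $k$, both of which follow from $H\cap K = 1$ for normal subgroups. The mild subtlety is that the powers $h^{yn}$ and $k^{xm}$ depend on the \emph{orders} of $h$ and $k$ dividing $m$ and $n$ respectively (they need not have order exactly $m$ or $n$), but since $h^m = k^n = 1$ always hold, the Bezout computation goes through verbatim. An alternative, slightly slicker phrasing would be to note that $g\in S$ means $\langle g\rangle \le S$, and that $\langle g\rangle$ decomposes as the internal direct product of its Sylow-type components $\langle h\rangle$ and $\langle k\rangle$ (grouped by the primes dividing $m$ versus $n$), each of which is therefore contained in $S$; but the explicit Bezout extraction of $h$ and $k$ as powers of $g$ is the cleanest route and the one I would write up.
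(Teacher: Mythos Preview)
Your proof is correct and follows essentially the same approach as the paper: both arguments observe that $H$ and $K$ centralise each other (from $H\cap K=1$), and then use a B\'ezout-type argument to recover the individual factors $h$ and $k$ as powers of the product $g=hk\in S$. The only cosmetic difference is that the paper works with the orders $\alpha,\beta$ of the individual elements rather than the orders $m,n$ of the groups, and extracts $y$ first via $(xy)^\alpha = y^\alpha$ and then $x$ by division; your version using full B\'ezout is equally valid.
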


\begin{proof}
Clearly $(H\cap S)(K\cap S) \subseteq HK\cap S$, 
so it suffices to prove the reverse inclusion.
Now suppose $x\in H$ and $y\in K$, with $xy\in S$.
If $\alpha$ and $\beta$ are the orders of $x$ and $y$, respectively, 
then $\gcd(\alpha,\beta) = 1$ and it follows that there exists an integer $\gamma$ 
with $\alpha\gamma  \equiv 1$ mod $\beta$. 
Also because $H$ and $K$ have coprime orders, they intersect trivially, 
and so $[H,K] \subseteq H \cap K = 1$, which means that $H$ and $K$ centralise each other. 
In particular, $x$ commutes with $y$, and therefore
$y^\alpha = x^\alpha y^\alpha = (xy)^\alpha \in S$, 
which in turn gives $y = y^{\alpha\gamma} = (y^\alpha)^\gamma \in S$, and then 
also $x = (xy)y^{-1} \in S$. 
Thus $x\in H\cap S$ and $y \in K\cap S$, and so $xy\in (H\cap S)(K\cap S)$, as required.
\end{proof}

%%%%%%%%%%%%%%%%%%%%
 \section{Proof of Theorem~\ref{thm:fg}}
 \label{sec:proof-thmfg}
%%%%%%%%%%%%%%%%%%%%

We prove Theorem~\ref{thm:fg} in stages, beginning with the following: 

\begin{lemma}
\label{lem:GvGL}
Let $\Gamma$ be a finite connected $G$-arc-transitive graph of valency $d \ge 3$, 
such that  $G_v^{\,\Gamma(v)}$ is quasiprimitive. 
Also suppose that $G$ has an abelian normal Sylow $p$-subgroup $P$ acting semiregularly on the vertex-set of $\Gamma$,  
where $p$ is an odd prime.
Then the centraliser $C_G(P)$ of $P$ in $G$ is a direct product 
$J \times P$ for some normal subgroup $J\hskip -1pt$ of $\hskip 1pt G$, 
and $C_G(P)$ acts semiregularly on $V(\Gamma)$. 
In particular, $\Aut(P)$ contains a subgroup isomorphic to $G_v$.
Moreover, if the valency $d$ of $\Gamma$ is prime, 
and $G_v^{\,\Gamma(v)}$ is cyclic $($of order $d)$    
and also $P$ is cyclic, % of order $p^\alpha$ for some integer $\alpha$, 
then $p \equiv 1$ {\em mod} $2d$, 
each of the groups $G$ and $\barG = G/P$ is generated by two elements of orders $d$ and $2$, 
and $[G,G]=C_G(P)$, $\,[\barG,\barG] \cong J$, and $G/[G,G] \cong \barG / [\barG, \barG] \cong C_{2d}$.
\end{lemma}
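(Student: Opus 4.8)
The plan is to handle the two assertions in turn, using the quasiprimitivity of $G_v^{\,\Gamma(v)}$ together with Lemmas~\ref{lem:qpquo} and~\ref{lem:locallyC}, plus elementary facts about coprime actions and cyclic groups.

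For the first assertion, I would start from the fact that, since $P$ is abelian, $P\le C_G(P)$, and since $P\norml G$ the centraliser $C_G(P)$ is also normal in $G$. As $P$ is a normal Sylow (hence normal Hall) subgroup of $C_G(P)$, the Schur--Zassenhaus theorem supplies a complement $J$, and because $P$ is central in $C_G(P)$ this complement centralises $P$, giving $C_G(P)=J\times P$. Here $J$ is exactly the set of $p'$-elements of $C_G(P)$, so it is characteristic in $C_G(P)$ and therefore normal in $G$. To get semiregularity of $C_G(P)$, I note $P$ is semiregular by hypothesis and argue that $J$ is too: if $J_v\neq1$, then Lemma~\ref{lem:qpquo} forces $J$ to have at most two orbits on $V(\Gamma)$, all of equal size since $J\norml G$ and $G$ is vertex-transitive; but each orbit size divides $|J|$ and so is coprime to $p$, whereas $p$ is odd and divides $|V(\Gamma)|$ (as $P$ is semiregular), ruling out an orbit of size $|V(\Gamma)|$ or $|V(\Gamma)|/2$. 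Hence $J$ has at least three orbits and $J_v=1$. Since commuting elements of coprime order are each powers of their product, an element of $J\times P$ fixing $v$ forces both its $J$- and $P$-parts to fix $v$, so $(C_G(P))_v=1$. Finally, conjugation gives a homomorphism $G_v\to\Aut(P)$ whose kernel is $G_v\cap C_G(P)=(C_G(P))_v=1$, embedding $G_v$ in $\Aut(P)$.

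For the ``Moreover'' assertion, Lemma~\ref{lem:locallyC} gives $G=\langle a,\tau\rangle$ with $|a|=d$, $|\tau|=2$ and $G_v=\langle a\rangle\cong C_d$. Since $P$ is cyclic and $p$ is odd, $\Aut(P)$ is cyclic of order $p^{\alpha-1}(p-1)$, and the embedding above shows $C_d\hookrightarrow\Aut(P)$. Now $d\neq p$, since otherwise $\langle a\rangle$ would be a nontrivial $p$-subgroup meeting the unique (normal) Sylow $p$-subgroup $P$ trivially, which is impossible. Hence $d\mid p-1$, and as $p$ is odd with $\gcd(2,d)=1$ we obtain $2d\mid p-1$, that is, $p\equiv1$ mod $2d$. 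The images $\bar a,\bar\tau$ generate $\barG=G/P$, with $\bar a$ of order $d$ (because $\langle a\rangle\cap P=1$) and $\bar\tau$ of order $2$ (because $\tau\notin P$, the group $P$ having odd order); this supplies the claimed generators of both $G$ and $\barG$.

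It remains to compute the derived subgroups, and here lies the only real obstacle. The conjugation map realises $G/C_G(P)$ as a subgroup of the cyclic group $\Aut(P)$, so $G/C_G(P)$ is cyclic and $[G,G]\le C_G(P)$. The crucial step is to show $\tau\notin C_G(P)$: were $\tau$ to centralise $P$, then as an involution it would lie in the $p'$-part $J$, whence $G=\langle a,\tau\rangle\le\langle a\rangle J$, a subgroup of order $d\hskip1pt|J|$ coprime to $p$, contradicting $p\mid|G|$. Thus the image of $\tau$ in $\Aut(P)$ has order $2$ and that of $a$ has order $d$; as $d$ is odd these generate a cyclic group of order $2d$, so $G/C_G(P)\cong C_{2d}$. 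On the other hand the coprime action of $\langle a\rangle$ on the cyclic $p$-group $P$ is fixed-point-free (the automorphism induced by $a$ has order $d\neq p$, hence is nontrivial modulo $p$), so $P=[P,\langle a\rangle]\le[G,G]$. Since $G=\langle a,\tau\rangle$, its abelianisation $G/[G,G]$ is a quotient of $C_d\times C_2\cong C_{2d}$ that surjects onto $G/C_G(P)\cong C_{2d}$, forcing $G/[G,G]\cong C_{2d}$ and $[G,G]=C_G(P)$. Passing to $\barG=G/P$ and using $P\le[G,G]$ then yields $[\barG,\barG]=[G,G]/P=C_G(P)/P\cong J$ and $\barG/[\barG,\barG]\cong G/[G,G]\cong C_{2d}$, completing the argument; everything outside the step $\tau\notin C_G(P)$ is routine coprime-order and cyclic-group bookkeeping.
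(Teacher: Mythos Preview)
Your proof is correct and follows essentially the same route as the paper's: Schur--Zassenhaus to split $C_G(P)=J\times P$, Lemma~\ref{lem:qpquo} to force semiregularity of $J$ (hence of $C_G(P)$), and then for the ``Moreover'' part the observation that $G/C_G(P)$ is abelian together with a contradiction showing the images of both generators survive in the quotient---the paper phrases this last step via $G/J$ rather than $G/C_G(P)$, but the underlying argument (if the involutory generator lay in $J$ then $|G|$ would be coprime to $p$) is identical. Your side remark that $P=[P,\langle a\rangle]\le[G,G]$ is correct but redundant, since $P\le C_G(P)=[G,G]$ follows immediately once you have established $[G,G]=C_G(P)$.
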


\begin{proof} 
First note that since $P$
acts semiregularly on $V(\Gamma)$, $p$ divides $|V(\Gamma)|$. 

Now let $C = C_G(P)$.  Then $C$ contains $P$ and is normal in $G$ (with factor group $G/C$ 
isomorphic to a subgroup of $\Aut(P)$). 
By the Schur-Zassenhaus theorem, we find that $P$ has a complement in $C$, 
and therefore $C = J \times P$ for some subgroup $J$ of $C$.  
Note that $J$ is a Hall $p'$-subgroup of $C$, so $J$ is characteristic in $C$ 
and hence normal in $G$.
Also $C_v = C \cap G_v = JP \cap G_v$, and so by Lemma~\ref{lem:HK}, 
we find that $C_v  = (J\cap G_v)(P\cap G_v) = J_{v}P_{v} = J_v$ (since $P_v = 1$). 

If $C_v$ (and therefore $J_v$) is non-trivial, then by Lemma~\ref{lem:qpquo} 
we know that $J$ has at most two orbits on $V(\Gamma)$. 
But $J \norml G$, and $G$ is transitive on $V(\Gamma)$, so all the orbits of $J$ 
have the same length, and since $|J|$ is coprime to $p$, it follows that 
the number of orbits of $J$ on $V(\Gamma)$ is divisible by $p$, a contradiction. 
Thus $C_v = 1$, or in other words, $C$ is semiregular on $V(\Gamma)$. 
Furthermore, this implies that the image of $G_v$ in the factor group $G/C$ 
is $G_vC/C \cong G_v/(G_v \cap C) = G_v/C_v \cong G_v$, and so $G_v$ is 
isomorphic to a subgroup of $\Aut(P)$.  
This proves the first part of the lemma.

Next, we suppose that the valency $d$ is prime, and that 
$G_v^{\,\Gamma(v)}\cong C_d$, and $P$ is cyclic, say of order $p^\alpha$. 
By Lemma~\ref{lem:locallyC} we know that $G$ is generated by 
an element $h$ of order $d$ and an element $a$ of order $2$, 
and it follows that the abelianisation $G/[G,G]$ is a quotient of the 
group $C_d \times C_2 \cong C_{2d}$.
On the other hand, since $G/C$ is isomorphic to a subgroup 
of $\Aut(P) \cong \Aut(C_{p^\alpha})$, which is abelian, 
%which is cyclic of order $p^\alpha-p^{\alpha-1}$,
we find that $[G,G] \le C$, and hence also $G/C$ is a quotient of $C_{2d}$. 
We will show that $[G,G]=C$.

Recall that $C=J\times P$, with $J \norml G$, and consider the quotient $G/J$. 
We have $|G/J| = |P||G/C| = p^\alpha |G/C|$, and since $|G/C|$ divides $2d$, 
it follows that $|G/J|$ divides $2dp^\alpha$.  
Next, because $P$ is semiregular, we know that $p$ is coprime to the order of $G_v$, 
and in particular $d \ne p$.
Also $G/J$ is generated by $Jh$ and $Ja$, the orders of which divide the 
primes $d$ and $2$ respectively.   Now if one of these cosets were trivial, 
then $G/J$ would be cyclic of order $1,2$ or $d$, but then its order would be 
coprime to $p$, a contradiction.  
Hence $Jh$ and $Ja$ have orders $d$ and $2$, so $|G/J|$ is divisible by $2d$, 
and therefore by $2dp^\alpha$ (again since $|J|$ is coprime to $p$). 
Thus $|G/J| = 2dp^\alpha$.

In turn this implies that $|G/C|=|G/J|/p^\alpha = 2d$, and then since $[G,G] \le C$ 
and $G/[G,G]$ is a quotient of $C_{2d}$, we deduce that $[G,G]=C$ 
and $G/[G,G] = G/C \cong C_{2d}$. 

Moreover, since $G/C$ is isomorphic to a subgroup of $\Aut(P) \cong \Aut(C_{p^\alpha})$, 
which is cyclic of order $\phi(p^\alpha) = p^{\alpha-1}(p-1)$, 
we find that $2d = |G/C|$ divides $p^{\alpha-1}(p-1)$ and hence divides $p-1$.
Thus $p \equiv 1$ mod $2d$.

Finally, we consider the quotient $\barG = G/P$. 
This is generated by the images $Ph$ and $Pa$, 
which have orders $d$ and $2$ (since the latter are both coprime to $|P|$). 
Also $[\barG, \barG] = [G/P, G/P] \cong [G,G]P/P = CP/P = C/P \cong J$, 
and it immediately follows that  
$\barG/[\barG,\barG] \cong (G/P) / (C/P) \cong G/C \cong G/[G,G] \cong C_{2d}$. 
\end{proof}
\vskip 2pt

\begin{lemma}
 \label{lem:main}
 Let $\Gamma$ be a finite connected $G$-arc-transitive graph 
 with valency $d \ge 3$, and suppose $G_v^{\,\Gamma(v)}$ is quasiprimitive.  
Also suppose that the order of $\Gamma$ is $kp$ or $kp^2$ for 
some positive integer $k$ and some prime $p$ which divides neither $k$ 
nor $|G_v^{\,\Gamma(v)}|$, and that $G$ has a normal Sylow $p$-subgroup $P$.
Then parts {\em (a)} and {\em (c)} of Theorem~{\em \ref{thm:fg}} hold, 
and if the valency $d$ is prime, then so does part {\em (b)}. 
\end{lemma}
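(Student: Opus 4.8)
The plan is to extract (a) from a divisibility count, to deduce (c) by proving that the centraliser $C_G(P)$ is semiregular, and to obtain (b) from the quotient machinery of Lemma~\ref{lem:Cheryl}; the one delicate point will be a small residual case in the proof of (c). First I would settle (a). By hypothesis $p\nmid|G_v^{\,\Gamma(v)}|$, so Lemma~\ref{lem:leash} gives $p\nmid|G_v|$. Since $|V(\Gamma)|=|G:G_v|=kp^\alpha$ with $p\nmid k$, the $p$-part of $|G|$ is therefore exactly $p^\alpha$, whence $|P|=p^\alpha$. Moreover $P_v=P\cap G_v$ is a $p$-subgroup of the $p'$-group $G_v$, so $P_v=1$ for every $v$ and $P$ is semiregular.

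For (c) I would show that $C:=C_G(P)$ is semiregular; once this is known, $G_v\cong G_vC/C\le G/C\hookrightarrow\Aut(P)$. Here I would reprise the argument of Lemma~\ref{lem:GvGL}: as $|P|\in\{p,p^2\}$ the group $P$ is abelian, so by Schur--Zassenhaus $C=J\times P$ with $J\norml G$ a Hall $p'$-subgroup, and Lemma~\ref{lem:HK} gives $C_v=J_v$ (because $P_v=1$). If $J_v\ne1$, then Lemma~\ref{lem:qpquo} forces $J$ to have at most two orbits on $V(\Gamma)$; but since $J\norml G$ these orbits have a common size coprime to $p$, so their number is divisible by $p^\alpha$. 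This is a contradiction whenever $p^\alpha\ge3$, and hence $C_v=1$ in every case except possibly $p=2$, $\alpha=1$.

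The residual case $p=2$, $\alpha=1$ is the main obstacle, because then $\Aut(P)$ is trivial and the orbit count yields only two $J$-orbits. I would eliminate it by a parity argument. When $p=2$ the group $G_v^{\,\Gamma(v)}$ has odd order, so the valency $d$, which divides it, is odd; and $P=\langle\sigma\rangle$ is a central involution, so $C=G=J\times\langle\sigma\rangle$ with $J$ of odd order and index $2$. From $C_v=G_v\ne1$ and Lemma~\ref{lem:qpquo}, $J$ has exactly two equal orbits $O_1,O_2$; since $G_v=J_v$ preserves this partition and acts transitively on $\Gamma(v)$, connectedness forces $\Gamma$ to be bipartite with parts $O_1,O_2$ interchanged by $\sigma$. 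Then $\Gamma\to\Gamma/\langle\sigma\rangle$ is a covering onto a $d$-regular simple graph on $k$ vertices, and $d,k$ are both odd, contradicting $dk=2|E(\Gamma/\langle\sigma\rangle)|$. Thus this case does not arise and (c) holds throughout.

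Finally, for (b) (under the extra hypothesis that $d$ is prime, with $k\ge3$) I would apply Lemma~\ref{lem:Cheryl}. A transitive group of prime degree is primitive, so $G$ is locally primitive; and by (a) the normal subgroup $P$ is semiregular with exactly $k\ge3$ orbits on $V(\Gamma)$. Lemma~\ref{lem:Cheryl} then shows that $\Gamma\to\barGa=\Gamma/P$ is a regular covering projection, that $\barG=G/P$ acts faithfully and arc-transitively on $\barGa$, and that $\barG_{\bar v}^{\,\barGa(\bar v)}$ is permutation isomorphic to $G_v^{\,\Gamma(v)}\cong L$; since $\barGa$ has $|V(\Gamma)|/|P|=k$ vertices, this is precisely part (b).
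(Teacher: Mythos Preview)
Your argument is correct and follows essentially the same route as the paper: part~(a) via Lemma~\ref{lem:leash} and a divisibility count, part~(c) via the centraliser argument (which the paper packages as Lemma~\ref{lem:GvGL} and simply cites), and part~(b) via Lemma~\ref{lem:Cheryl} using that prime degree implies local primitivity.

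The one genuine difference is your treatment of the residual case $p=2$, $\alpha=1$. The paper's proof of Lemma~\ref{lem:main} simply invokes Lemma~\ref{lem:GvGL}, whose hypothesis requires $p$ to be odd; this is harmless in the intended application to Theorem~\ref{thm:fg} (where $p\ge kc(L)\ge d\ge 3$), but as Lemma~\ref{lem:main} is stated it does not exclude $p=2$, and part~(c) would otherwise assert that $G_v$ embeds in $\Aut(C_2)=1$. Your parity argument rules this configuration out and is therefore a strengthening. One small point: the claim that $\Gamma\to\Gamma/\langle\sigma\rangle$ is a regular covering onto a \emph{simple} graph needs a line of justification, namely that $\sigma$ fixes no edge setwise. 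This is easy: if $\sigma$ fixed the edge $\{v,\sigma v\}$, then since $\sigma$ is central and $G$ is edge-transitive every edge would have this form, forcing $d=1$. (Equivalently, and perhaps more directly: $|E(\Gamma)|=dk$ is odd, so the involution $\sigma$ must fix some edge, and the same centrality/edge-transitivity argument gives the contradiction without mentioning the quotient at all.)
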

 
\begin{proof}
%Let $P$ be the Sylow $p$-subgroup of $G$.
Suppose $|V(\Gamma)| = kp^\alpha$, so that $\alpha = 1$ or $2$. 
Because $p$ is coprime to $|G_v^{\,\Gamma(v)}|$, we know from Lemma~\ref{lem:leash} 
that $p$ is coprime to $|G_v|$, and it follows that $P_v=1$, 
or in other words, $P$ is semiregular on $V(\Gamma)$. 
Moreover, since $|G| = kp^\alpha |G_v|$ and $p$ is coprime to both $k$ and $|G_v|$, 
we find that $|P|=p^\alpha$.
In particular, $P$ is abelian (since $\alpha \le 2$), so the conditions of Lemma~\ref{lem:GvGL} 
are fulfilled, and therefore $G_v$ is isomorphic to a subgroup of $\Aut(P)$.
These observations prove parts (a) and (c). 

Next suppose $k\ge 3$.   Then the order of the quotient graph $\bar{\Gamma} = \Gamma/P$ is 
equal to the number of $P$-orbits on $V(\Gamma)$, which is $|V(\Gamma)| / |P| = k$.
Also since $k\ge 3$ and $G_v^{\,\Gamma(v)}$ is quasiprimitive,  an application 
of Lemma~\ref{lem:Cheryl} (with $N = P$) shows that $\bar{G}$ acts faithfully 
and arc-transitively on $\bar{\Gamma}$. 
Moreover, if the valency $d$ is prime then $G$ is locally primitive, and 
it follows (by Lemma~\ref{lem:Cheryl}) that the natural projection $\Gamma \to \bar{\Gamma}$ is 
a regular covering projection, and $G_v^{\,\Gamma(v)}$ and $\bar{G}_{\bar{v}}^{\,\bar{\Gamma}(\bar{v})}$ 
are permutation isomorphic, so that $\bar{G}$ is locally $G_v^{\,\Gamma(v)}$.
This proves part (b). 
\end{proof}

Now to complete the proof of Theorem~\ref{thm:fg}, all we need to do is prove part (d), 
which includes showing that the valency $d$ is prime.  
We will use the fact that $|P| = p$ or $p^2$, from which it follows that $P$ is cyclic 
or elementary abelian of rank $2$. 

We proceed by dealing with two special cases. 

\smallskip
 
\begin{lemma}
\label{lem:abGv}
If $G_v^{\,\Gamma(v)}$ is abelian, then part {\em (d)} of Theorem~{\em\ref{thm:fg}} holds.
\end{lemma}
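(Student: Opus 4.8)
The plan is to first use abelianness together with quasiprimitivity to force $G_v^{\,\Gamma(v)}$ to be cyclic of prime order, and then to read off cases~(i) and~(ii) according to the isomorphism type of $P$, invoking the machinery already assembled in Lemmas~\ref{lem:locallyC} and~\ref{lem:GvGL}.

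The crux is the first step. I would argue that an abelian transitive permutation group is regular: in an abelian group the point stabiliser $L_\omega$ is normal, so by transitivity it is conjugate to---hence equal to---the stabiliser of every point, and therefore fixes everything and is trivial. Thus $G_v^{\,\Gamma(v)}$ is regular of degree $d$, so $|G_v^{\,\Gamma(v)}| = d$. Since it is abelian, all of its subgroups are normal, and in a regular action a subgroup is transitive precisely when it is the whole group; quasiprimitivity therefore forbids any proper nontrivial subgroup, forcing $G_v^{\,\Gamma(v)} \cong C_d$ with $d$ prime. (One could instead quote the affine-type conclusion of Lemma~\ref{lem:bur}: an abelian quasiprimitive group coincides with its regular socle $\ZZ_q^m$, which for $m \ge 2$ would contain an intransitive normal subgroup, so $m = 1$ and $d = q$ is prime.) This already establishes the primality of the valency asserted in part~(d).

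With $G_v^{\,\Gamma(v)}$ cyclic of order equal to the valency, Lemma~\ref{lem:locallyC} gives $G_v \cong G_v^{\,\Gamma(v)} \cong C_d$, and by part~(c) (proved in Lemma~\ref{lem:main}) this $C_d$ embeds in $\Aut(P)$, where $|P| = p^\alpha$ with $\alpha \in \{1,2\}$. Hence $P$ is either cyclic or elementary abelian of order $p^2$, and I would split accordingly. If $P$ is elementary abelian of order $p^2$ then $\alpha = 2$ and $G_v^{\,\Gamma(v)} \cong C_d$, which is one of the two alternatives of case~(ii), so there is nothing more to prove. If instead $P$ is cyclic, I would first rule out $p = 2$---then $\Aut(P)$ would be trivial or of order $2$, too small to contain $C_d$ for the odd prime $d \ge 3$---so that $p$ is odd and the second half of Lemma~\ref{lem:GvGL} applies; it yields at once $p \equiv 1$ mod $2d$ and $\bar{G}/[\bar{G},\bar{G}] \cong C_{2d}$. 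Finally, the orbit-stabiliser count $|G| = |V(\Gamma)|\,|G_v| = kp^\alpha d$ gives $|\bar{G}| = |G|/|P| = kd$, completing case~(i).

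I expect the only genuinely delicate point to be the reduction in the second paragraph: everything downstream is bookkeeping, so the proof stands or falls on correctly establishing that an abelian quasiprimitive group of degree $d$ must be cyclic of prime order. The two elementary facts ``abelian transitive $\Rightarrow$ regular'' and ``regular abelian quasiprimitive $\Rightarrow$ prime cyclic'' need to be stated cleanly, after which Lemmas~\ref{lem:locallyC} and~\ref{lem:GvGL} supply all of the remaining structure.
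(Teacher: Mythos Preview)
Your proposal is correct and follows essentially the same approach as the paper: use abelianness plus quasiprimitivity to force $G_v^{\,\Gamma(v)} \cong C_d$ with $d$ prime, then split on the structure of $P$ and invoke Lemma~\ref{lem:GvGL} in the cyclic case. You are slightly more explicit than the paper (spelling out why abelian transitive implies regular, and verifying $p$ is odd rather than relying on $p \ge kc(L)$), and you correctly point to Lemma~\ref{lem:GvGL} for the conclusions in~(d)(i), where the paper's terser proof cites Lemma~\ref{lem:main}.
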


\begin{proof} 
Since $G_v^{\,\Gamma(v)}$ is abelian, it is regular on $\Gamma(v)$, 
and since it is also quasiprimitive, it must be cyclic of prime order. 
Thus $d$ is prime, and $G_v^{\,\Gamma(v)}\cong C_d$. 
Moreover, since $G_v^{\,\Gamma(v)}$ is regular, it follows from connectivity of $\Gamma$ 
that the kernel $G_v^{[1]}$ is trivial, and hence $G_v \cong G_v^{\,\Gamma(v)}$. 
Finally, if $P$ is cyclic, then part (d)(i) of of Theorem~\ref{thm:fg} holds by Lemma~\ref{lem:main}; 
and on the other hand, if $P$ is not cyclic, then $\alpha = 2$ and $P$ is elementary abelian,
so part (d)(ii)  holds. 
%In both cases, all claims of the theorem hold. 
%This completes the proof of Lemma~\ref{lem:abGv}.
\end{proof} 
\smallskip

\begin{lemma}
\label{lem:cyclicP}
If the Sylow subgroup $P$ of $G$ is cyclic, then part {\em (d)} of Theorem~{\em\ref{thm:fg}} holds.
\end{lemma}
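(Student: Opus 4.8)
The plan is to sidestep any direct analysis of $G_v^{\,\Gamma(v)}$ and instead reduce this case to the abelian case already settled in Lemma~\ref{lem:abGv}. Concretely, I would show that cyclicity of $P$ forces $G_v^{\,\Gamma(v)}$ to be abelian, and then simply quote Lemma~\ref{lem:abGv} to obtain part~(d).

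The reduction rests on a single structural fact. We are in the setting of Lemma~\ref{lem:main}, so part~(c) applies and $G_v$ embeds in $\Aut(P)$. Writing $|P|=p^\alpha$ with $\alpha\in\{1,2\}$, and noting that $p\ge k\,c(L)\ge 3$ is odd, the group $\Aut(P)\cong\Aut(C_{p^\alpha})$ is cyclic (of order $p^{\alpha-1}(p-1)$). Hence $G_v$, being isomorphic to a subgroup of a cyclic group, is itself cyclic; and then $G_v^{\,\Gamma(v)}$, being the image of $G_v$ under its action on $\Gamma(v)$ and so a quotient of $G_v$, is cyclic, in particular abelian. At this point Lemma~\ref{lem:abGv} applies and yields part~(d); since $P$ is cyclic, the internal case split of that lemma places us in alternative~(d)(i). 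For completeness I would record that the order count $|\bar{G}|=|G|/|P|=k\,|G_v|=kd$ follows from $G_v\cong G_v^{\,\Gamma(v)}\cong C_d$ (the kernel $G_v^{[1]}$ vanishing by connectivity, as $G_v^{\,\Gamma(v)}$ is regular), while the congruence $p\equiv 1$ mod $2d$ and the isomorphism $\bar{G}/[\bar{G},\bar{G}]\cong C_{2d}$ are exactly the conclusions of the ``Moreover'' clause of Lemma~\ref{lem:GvGL}, whose hypotheses ($d$ prime, $G_v^{\,\Gamma(v)}\cong C_d$, $P$ cyclic, $p$ odd) are now all in hand.

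I do not anticipate a genuine obstacle here: the entire weight of the argument sits on the elementary fact that $\Aut(C_{p^\alpha})$ is cyclic for odd $p$, which collapses the problem to the previously treated abelian situation. The only implicit subtlety — that a cyclic quasiprimitive group of degree $d$ must have $d$ prime, so that $G_v^{\,\Gamma(v)}\cong C_d$ rather than some proper cyclic quotient — is already absorbed into Lemma~\ref{lem:abGv}, so I would not reprove it here.
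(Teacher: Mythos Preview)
Your proposal is correct and follows essentially the same route as the paper: use part~(c) to embed $G_v$ in $\Aut(P)$, observe that $\Aut(P)$ is cyclic (the paper phrases this via ``$P$ cyclic of prime power order'' rather than via oddness of $p$, but either works here since $\alpha\le 2$), deduce that $G_v$ and hence $G_v^{\,\Gamma(v)}$ is cyclic, and invoke Lemma~\ref{lem:abGv}. Your closing paragraph recapitulating $|\bar G|=kd$, $p\equiv 1\pmod{2d}$ and $\bar G/[\bar G,\bar G]\cong C_{2d}$ is accurate but unnecessary, since these conclusions are already packaged into Lemma~\ref{lem:abGv} (via Lemma~\ref{lem:GvGL}).
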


\begin{proof} 
Since $P$ is cyclic of prime power order, $\Aut(P)$ is cyclic, and hence $G_v$ is cyclic, by part (c). 
In turn this implies that $G_v^{\,\Gamma(v)}$ is cyclic, and Lemma~\ref{lem:abGv} applies.
\end{proof} 

 \smallskip
For the rest of the proof we may now assume that $\alpha =2$, and $P$ is elementary abelian
 of order $p^2$. Also it suffices to show that $d$ is prime, and $G_v^{\,\Gamma(v)}\cong C_d$ or $D_d$.
The latter holds automatically when $d = 3$, so we can also assume that $d\ge 4$.
 \smallskip
 
Now since $P\cong \ZZ_p^2$, we know $\Aut(P)$ is isomorphic to $\GL(2,p)$, 
and by part (c) it follows that $G_v$ is isomorphic to a subgroup of $\GL(2,p)$.
 In view of this, we can think of $G_v$ as a subgroup of $\GL(2,p)$.
We will consider the intersection  
$$ H = G_v \cap \SL(2,p).$$
This is a normal subgroup of $G_v$, with cyclic quotient, since  
$$
 G_v / H \, = \, G_v/(G_v \cap \SL(2,p)) \, \cong \, G_v \SL(2,p) / \SL(2,p) \, \le \, \GL(2,p)/\SL(2,p) \, \cong \, C_{p-1}.
$$
Also if $H$ is contained in $G_v^{[1]}$, 
then $G_v^{\,\Gamma(v)} \cong G_v/G_v^{[1]} \cong (G_v/ H)/(G_v^{[1]}/H)$, 
which is a quotient of $G_v/ H$ and therefore cyclic, 
and then part (d) follows from Lemma~\ref{lem:abGv}.

We may therefore assume that $H$ is not contained in $G_v^{[1]}$, and hence that
$H^{\,\Gamma(v)}$ is a non-trivial normal subgroup of $G_v^{\,\Gamma(v)}$. 
Moreover, since $G_v^{\,\Gamma(v)}$ is quasiprimitive,
this implies that  $H^{\,\Gamma(v)}$ is transitive on $\Gamma(v)$.

\smallskip
On the other hand, $H$ is a subgroup of $\SL(2,p)$, with order coprime to $p$, 
and so it follows from the classification of subgroups of $2$-dimensional special linear groups 
\cite[Theorem 6.17]{suzuki} that $H$ is isomorphic to one of the following:
\begin{itemize}
\item[{\rm (1)}]
a cyclic group; \vskip 3pt  
\item[{\rm (2)}]
a metacyclic group $\langle\, x,y \ | \ x^n = y^2, \, y^{-1}xy=x^{-1} \, \rangle$ of order $2n\hskip 1pt$; \vskip 2pt  
\item[{\rm (3)}]
the group $\hat{S}_4$ of order $48$ with a unique involution $\tau$ such that $\langle \tau \rangle$ is
the centre of $\hat{S}_4$, and $\hat{S}_4/\langle \tau \rangle \cong S_4$; \vskip 3pt 
\item[{\rm (4)}]
the special linear group $\SL(2,3)$; \vskip 3pt 
\item[{\rm (5)}]
the special linear group $\SL(2,5)$, in cases where $p \equiv \pm 1$ mod $5$.
\end{itemize}
\medskip

\noindent
This  allows us to prove the following:
 
\begin{lemma}
\label{lem:dodd}
The valency $d$ is at least $5$.
Moreover, the group $H^{\,\Gamma(v)} \cong H/(H \cap G_v^{[1]})$ is either cyclic or dihedral, 
or isomorphic to a quotient of $A_4$, $S_4$ or $A_5$.
\end{lemma}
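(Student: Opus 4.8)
The plan is to pit the unique involution of the groups in cases (2)--(5) against the quasiprimitivity of $G_v^{\,\Gamma(v)}$. Since we are in the case $H\not\le G_v^{[1]}$, the group $H^{\,\Gamma(v)}$ is a non-trivial normal subgroup of the quasiprimitive group $G_v^{\,\Gamma(v)}$, hence transitive on $\Gamma(v)$; thus $H^{\,\Gamma(v)}$ is a transitive permutation group of degree $d\ge 4$, and in particular $d>2$. In case (1) the group $H$ is cyclic, so $H^{\,\Gamma(v)}$ is cyclic at once, and the claim holds; so I would concentrate on cases (2)--(5).

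For the ``moreover'' clause the key point is that the only involution of $\SL(2,p)$ is its central element $z=-I_2$, so every subgroup of $\SL(2,p)$ of even order contains $z$ as its \emph{unique} involution. Hence in each of the cases (2)--(5) the element $z$ is the unique involution of $H$, and it is central in $H$; therefore $\langle z\rangle$ is characteristic in $H$, and since $H\norml G_v$ we obtain $\langle z\rangle\norml G_v$. The image of $\langle z\rangle$ in $G_v^{\,\Gamma(v)}=G_v/G_v^{[1]}$ is then normal, and were it non-trivial it would, by quasiprimitivity, be a transitive normal subgroup of order $2$, forcing $d\le 2$ and contradicting $d>2$. So $z\in G_v^{[1]}$, whence $\langle z\rangle\le H\cap G_v^{[1]}$ and $H^{\,\Gamma(v)}\cong H/(H\cap G_v^{[1]})$ is a quotient of $H/\langle z\rangle$. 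Reading $H/\langle z\rangle$ off the classification, it is dihedral in case (2) and isomorphic to $S_4$, $A_4$, $A_5$ in cases (3), (4), (5) respectively; together with the cyclic case (1) this yields exactly the asserted list.

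It remains to prove $d\ge 5$, and I expect this to be the main obstacle. Suppose $d=4$. The only quasiprimitive permutation groups of degree $4$ are $A_4$ and $S_4$, since every other transitive group of degree $4$ (namely $C_4$, $C_2\times C_2$ and $D_4$) has a non-trivial intransitive normal subgroup; hence $G_v^{\,\Gamma(v)}\in\{A_4,S_4\}$, with socle the regular group $C_2\times C_2$. As $G_v/H$ is cyclic, so is $G_v^{\,\Gamma(v)}/H^{\,\Gamma(v)}$, and together with the list above and the transitivity of $H^{\,\Gamma(v)}$ this pins $H^{\,\Gamma(v)}$ down to $A_4$, $S_4$ or $C_2\times C_2$, forcing $H$ into case (2), (3) or (4). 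Eliminating these three configurations is the crux. The natural route is to promote $\langle z\rangle$ to a normal subgroup of the whole group $G$ and then derive $z=1$ from $z\in G_v^{[1]}$, in the spirit of the proof of Lemma~\ref{lem:Op}: one already has $\langle z\rangle\norml G_v$, and $z\in G_{uv}$ for each neighbour $u$ of $v$, so it suffices to control the action of an edge-reversing automorphism on $z$. The difficulty is precisely that such an automorphism may send $z$ to $z$ times an element of $C_G(P)$; showing that this perturbation is trivial is where the structure $C_G(P)=J\times P$ of Lemma~\ref{lem:GvGL} and the characteristic-subgroup property of $T=\langle -I_2\rangle$ from Lemma~\ref{lem:charT} (for case (5)) come into play. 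Once $\langle z\rangle\norml G$ is established, $z\in G_v^{[1]}$ forces the normal subgroup $\langle z\rangle$ to fix every arc of $\Gamma$, so $z=1$; this contradicts the fact that $z$ acts on $P$ as the fixed-point-free inversion $-I_2$ and hence is non-trivial on $V(\Gamma)$, and thereby rules out $d=4$.
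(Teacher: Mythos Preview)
Your treatment of the ``moreover'' clause is correct and essentially identical to the paper's: once $T=\langle -I_2\rangle$ is seen to be characteristic in $H$ and hence normal in $G_v$, quasiprimitivity forces $T\le G_v^{[1]}$, so $H^{\,\Gamma(v)}$ is a quotient of $H/T$, and reading off cases (1)--(5) gives the stated list.

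The gap is in your proof that $d\ge 5$. You correctly reduce (when $d=4$) to $H^{\,\Gamma(v)}\in\{V_4,A_4,S_4\}$ and hence to cases (2), (3) or (4), all with $|H|$ even, and you correctly identify that it would suffice to promote $\langle z\rangle$ to a normal subgroup of $G$. But you do not carry this out: you note that an edge-reversing $\tau$ sends $z$ to $z$ times some $c\in C_G(P)$, then gesture at Lemma~\ref{lem:GvGL} and --- irrelevantly, since case (5) has already been excluded here --- at Lemma~\ref{lem:charT}, without ever establishing $c=1$. The missing step is in fact short: since $z\in G_v^{[1]}\le G_u$, conjugation by $\tau$ (which swaps $u$ and $v$) gives $z^\tau\in G_{u^\tau}=G_v$, so $c=z^\tau z^{-1}\in G_v\cap C_G(P)$; and $C_G(P)$ acts semiregularly on $V(\Gamma)$ by Lemma~\ref{lem:GvGL}, so this intersection is trivial and $z^\tau=z$. (Once completed, this argument makes no use of $d=4$ and in fact rules out cases (2)--(5) outright --- stronger than what Lemma~\ref{lem:dodd} asserts.)

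The paper's route to $d\ge 5$ is different and shorter. Having established $T\le G_v^{[1]}$ (so $T^{\,\Gamma(v)}=1$), it simply invokes Lemma~\ref{lem:Op} with $C=T$ and the prime $2$ to conclude $O_2(G_{uv}^{\,\Gamma(v)})\ne 1$. For $d=4$ the point stabiliser $G_{uv}^{\,\Gamma(v)}$ in $A_4$ or $S_4$ is $C_3$ or $S_3$, and neither has a non-trivial normal $2$-subgroup, giving the desired contradiction. (The residual case where $|H|$ is odd is handled separately in the paper: then $H$ is cyclic of odd order, so the transitive group $H^{\,\Gamma(v)}$ has odd order, forcing $d$ odd and hence $d\ge 5$.) You had Lemma~\ref{lem:Op} available and even allude to its proof; applying it directly would have closed the argument in two lines.
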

  
 \begin{proof}
 Suppose first that $H$ has odd order.  Then $H$ is cyclic (since all the groups in 
 cases (2) to (5) above have even order), and therefore $|H^{\,\Gamma(v)}|$ is cyclic 
 of odd order.  By transitivity of $H^{\,\Gamma(v)}$ on $\Gamma(v)$, it follows that 
 the valency $d = |\Gamma(v)|$ is odd. Since $d\ge 4$, it follows that $d\ge 5$. 
 
Hence we may assume that $H$ has even order.  In that case, $H$ must contain the centre $T$ 
of $\SL(2,p)$, generated by the unique involution $-I_2\,$ in $\SL(2,p)$. 
In particular, the latter is the only involution in $H$, so $T$ is characteristic in $H$, 
and hence normal in $G_v$. 
On the other hand, $G_v^{\,\Gamma(v)}$ is quasiprimitive, and so contains no 
non-trivial normal subgroups of order less than $d = |\Gamma(v)|$. 
It follows that $T^{\,\Gamma(v)}  =1$, or equivalently, $T\le G_v^{[1]}$.
Then by Lemma~\ref{lem:Op}, we find that  $O_2(G_{uv}^{\,\Gamma(v)}) \not = 1$. 
If $d=4$, then $G_v^{\Gamma(v)}$ is permutation isomorphic to $A_4$ or $S_4$ in their natural actions on $4$ points (these being
the only quasiprimitive permutation groups of degree $4$).
Then $G_{uv}^{\,\Gamma(v)} \cong C_3$ or $S_3$. However, neither of these two groups contains a non-trivial normal $2$-subgroup.
This contradiction shows that $d\ge 5$.

Moreover, since $T\le H \cap G_v^{[1]}$, we know that 
 \begin{equation*}
  H^{\,\Gamma(v)}  \cong H/(H \cap G_v^{[1]}) \cong (H/T) / ((H \cap G_v^{[1]}) / T), 
 \end{equation*}
and so $H^{\,\Gamma(v)}$ is a quotient of $H/T$.
By inspection of the groups in cases (1) to (5), we see that $H/T$ is 
cyclic in case (1), dihedral of order $2n$ in case (2), $\,S_4$ in case (3),
$\,\PSL(2,3) \cong A_4$ in case (4), and $\,\PSL(2,5) \cong A_5$ in case (5), 
and the rest follows. 
\end{proof}

%In particular, since $d$ is odd, we can now assume that $d \ge 5$. 
We complete the proof of Theorem~\ref{thm:fg} by considering the 
cases (1) to (5) above in more detail. 
 
\begin{lemma}
\label{lem:cases1&2}
Part {\em (d)} of Theorem~{\em\ref{thm:fg}} holds in cases {\em (1)} and {\em (2)}. 
\end{lemma}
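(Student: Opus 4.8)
We are in the residual situation where $\alpha = 2$, the Sylow subgroup $P$ is elementary abelian of order $p^2$, the valency $d\ge 5$ is to be shown prime, and $G_v$ is viewed as a subgroup of $\GL(2,p)$ via part (c). We have set $H = G_v\cap \SL(2,p)$, and we are assuming $H^{\,\Gamma(v)}$ is transitive on $\Gamma(v)$; moreover we know from Lemma~\ref{lem:dodd} that in the present two cases $H/T$ (equivalently $H^{\,\Gamma(v)}$, since $T\le H\cap G_v^{[1]}$ when $H$ has even order) is cyclic in case (1) and dihedral in case (2). The goal is to conclude that $d$ is prime and that $G_v^{\,\Gamma(v)}\cong C_d$ or $D_d$.

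\emph{The plan.} First I would treat case (1), where $H^{\,\Gamma(v)}$ is cyclic and transitive, hence regular of order $d$, on $\Gamma(v)$. Since $H^{\,\Gamma(v)}$ is a nontrivial normal subgroup of the quasiprimitive group $G_v^{\,\Gamma(v)}$ and is regular (of order exactly $d$), and since a quasiprimitive group containing a regular abelian normal subgroup must have that subgroup minimal, I would argue that $G_v^{\,\Gamma(v)}$ is in fact primitive of affine type with an elementary abelian regular normal socle; because that socle here is cyclic of order $d$, we must have $d$ prime and the socle $\cong C_d$. The additional input needed is that $G_v^{\,\Gamma(v)}$ normalises a cyclic regular subgroup $C_d$, so it embeds in $\AGL(1,d)\cong C_d\rtimes C_{d-1}$, forcing $G_v^{\,\Gamma(v)}$ to be a subgroup of $\AGL(1,d)$ containing the translations — but I then want to upgrade this to $G_v^{\,\Gamma(v)}\cong C_d$ exactly, presumably using the cyclic structure of $G_v/H$ (the $\SL$-quotient being cyclic of order dividing $p-1$) together with the fact that $G_v^{\,\Gamma(v)}$ is a quotient of $G_v$ whose $H$-part is already cyclic; this should pin $G_v^{\,\Gamma(v)}$ down to $C_d$.

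For case (2), where $H^{\,\Gamma(v)}$ is dihedral of order $2n$ and transitive on $\Gamma(v)$, I would first identify the order: a transitive dihedral group of degree $d$ is either regular (so $2n=d$, $d$ even — but one checks the regular representation of $D_{d/2}$ forces structure) or acts with point stabiliser of order $2$ (so $2n = 2d$, i.e. $n=d$, giving the standard action of $D_d$ on $d$ points). The quasiprimitivity of $G_v^{\,\Gamma(v)}$ is the decisive constraint: a dihedral group $D_n$ has a cyclic normal subgroup of index $2$, and for $G_v^{\,\Gamma(v)}$ to be quasiprimitive this cyclic subgroup (if nontrivial and intransitive) would violate quasiprimitivity — so I would use this to force the cyclic subgroup to be transitive, hence regular of prime order, yielding $d$ prime and $H^{\,\Gamma(v)}\cong D_d$ acting naturally. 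Then, since $H^{\,\Gamma(v)}\trianglelefteq G_v^{\,\Gamma(v)}$ and $G_v^{\,\Gamma(v)}/H^{\,\Gamma(v)}$ is cyclic (inherited from $G_v/H$ being cyclic), and since $D_d$ in its natural action on $d$ points is self-normalising in $\Sym(d)$ for $d$ prime (its normaliser being $\AGL(1,d)$, within which $D_d$ has index $(d-1)/2$), I would need to rule out a proper cyclic overgroup that keeps the whole thing quasiprimitive and equal to $G_v^{\,\Gamma(v)}$; the cleanest route is to show directly that the normal dihedral subgroup $H^{\,\Gamma(v)} = D_d$ already acts $2$-transitively or at least leaves no room, forcing $G_v^{\,\Gamma(v)}\cong D_d$.

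\emph{The main obstacle.} The delicate point is not establishing that $d$ is prime — that follows fairly mechanically from quasiprimitivity applied to the cyclic (sub)group inside $H^{\,\Gamma(v)}$ — but rather pinning down $G_v^{\,\Gamma(v)}$ \emph{exactly} as $C_d$ or $D_d$ rather than some larger affine-type group $C_d\rtimes C_m$ that also contains the given normal subgroup. The leverage for this must come from the $\GL(2,p)$ structure: the quotient $G_v/H$ is cyclic of order dividing $p-1$, so $G_v^{\,\Gamma(v)}$ is an extension of the known (cyclic or dihedral) group $H^{\,\Gamma(v)}$ by a cyclic group, and I expect the argument to combine this cyclic-extension constraint with the possible orders of $H^{\,\Gamma(v)}$ arising from a genuine subgroup of $\SL(2,p)$ of the metacyclic shape in case (2). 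Reconciling the abstract permutation-group possibilities with what can actually be realised as $G_v\le\GL(2,p)$ acting on $\Gamma(v)$ is where the real work lies.
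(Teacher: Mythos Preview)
You have correctly identified both the easy part and the hard part: establishing that $d$ is prime follows from quasiprimitivity applied to the cyclic piece of $H^{\,\Gamma(v)}$, and the genuine obstacle is ruling out $G_v^{\,\Gamma(v)}\cong C_d\rtimes C_m$ for some $m>2$ inside $\AGL(1,d)$. However, your proposed leverage for this last step --- the fact that $G_v/H$ is cyclic --- is not enough by itself. An extension of $C_d$ (or $D_d$) by a cyclic group can perfectly well be $C_d\rtimes C_4$, say, when $4\mid d-1$; nothing in the abstract extension data excludes this. You seem to sense this, since you end by saying the real work lies in ``reconciling the abstract permutation-group possibilities with what can actually be realised as $G_v\le\GL(2,p)$,'' but you never supply the reconciliation.

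The paper's proof fills exactly this gap with a concrete matrix argument. One takes $C$ to be the characteristic maximal cyclic subgroup of $H$ (so $C=H$ in case~(1), and $C=\langle x\rangle$ of index~$2$ in case~(2)); then $C\trianglelefteq G_v$ and $C^{\,\Gamma(v)}$ is the regular socle $C_d$. Now $C$ is a cyclic subgroup of $\SL(2,p)$ of order prime to $p$, hence (up to conjugacy) generated by a diagonal matrix $X=\mathrm{diag}(\beta,\beta^{-1})$. A direct computation shows that any $A\in\GL(2,p)$ normalising $\langle X\rangle$ is either diagonal or anti-diagonal, and in either case $A^2$ commutes with $X$. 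Thus the square of every element of $G_v$ centralises $C$, so the square of every element of $G_v^{\,\Gamma(v)}$ centralises $C^{\,\Gamma(v)}\cong C_d$. Inside $\AGL(1,d)$ the centraliser of the translation subgroup is the translation subgroup itself, so $G_v^{\,\Gamma(v)}/C^{\,\Gamma(v)}$ has exponent at most $2$, forcing $G_v^{\,\Gamma(v)}\cong C_d$ or $D_d$. This matrix normaliser computation is the missing idea in your sketch; without it (or an equivalent structural fact about tori in $\GL(2,p)$), the argument does not close. Note also that your expectation that case~(1) yields $C_d$ and case~(2) yields $D_d$ is not quite right: both outcomes are possible in each case, and the paper does not separate them.
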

  
 \begin{proof}
In these two cases, $H$ is soluble, and then since $G_v/H = G_v/(G_v \cap \SL(2,p))$ is cyclic, 
both $G_v$ and $G_v^{\,\Gamma(v)}$ are soluble too. 
By Lemma~\ref{lem:bur}, we find that the socle $M = \soc(G_v^{\,\Gamma(v)})$ is an 
elementary abelian group, of order $q$, say, and that $G_v^{\,\Gamma(v)}$ is permutation 
isomorphic to a subgroup of $\AGL(1,q)$, so $d = q$, and $M$ is the  only minimal normal subgroup 
of $G_v^{\,\Gamma(v)}$.  %Hence $d$ is a prime-power.
Hence in particular,  $M = \soc(G_v^{\,\Gamma(v)})$ is a subgroup of $H^{\,\Gamma(v)}$.
But $H^{\,\Gamma(v)}$ is a quotient of $H/T$, and so is either cyclic or dihedral, 
and therefore $M$ is cyclic or dihedral, which implies that $M \cong C_q$ or $C_2\times C_2$. 
In the latter case, however, $d = q = 4$, which is impossible by Lemma~\ref{lem:dodd}. 
Thus $M \cong C_q$, and since $M$ is elementary abelian, it follows that $d = q$ is prime. 

It remains to show that $G_v^{\,\Gamma(v)}\cong C_q$ or $D_q$. 

Let $C$ be the largest cyclic subgroup of $H$, which has index $1$ or $2$ in $H$.
In fact $C = H$ in case (1), or the unique subgroup $\langle x\rangle$ of order $n$ 
and index $2$, in case (2). 
In each case, $C$ is a characteristic subgroup of $H$, and hence normal in $G_v$, 
and it follows that $C^{\,\Gamma(v)}$ is a normal subgroup of $G_v^{\,\Gamma(v)}$, 
of index at most $2$ in $H^{\,\Gamma(v)}$. 
Then because $G_v^{\,\Gamma(v)}$ is quasiprimitive,  $C^{\,\Gamma(v)}$ is transitive 
and hence regular. But now $C^{\,\Gamma(v)}$ must be a minimal normal subgroup
of $G_v^{\,\Gamma(v)}$, and in particular,  $C^{\,\Gamma(v)} = \soc(G_v^{\,\Gamma(v)}) \cong C_q$.
  
Next, since $C$ is a cyclic subgroup of $\SL(2,p)$ with order $|C|$ dividing $|H|$ and 
hence coprime to $p$, we find that $C$ is generated by some diagonal matrix of the form
  $$
 X= \left[
  \begin{matrix}
   \beta & 0 \\ 0 & \beta^{-1}
  \end{matrix}
  \right].
  $$
A direct computation shows that any matrix $A\in \GL(2,p)$ that conjugates $X$ to 
some power of $X$ is either diagonal or of the form
  $$
  \left[
  \begin{matrix}
   0 & b \\ c & 0
  \end{matrix}
  \right].
  $$ 
In both cases, $A^2$ is a diagonal matrix that commutes with $X$. 
%In other words, the square of every element in $G_v$ centralises $C$.
%
Then since  $C$ is normal in $G_v$, it follows that the square of every 
element of $G_v$ centralises $C$, and hence the square of every element 
of $G_v^{\,\Gamma(v)}$ centralizes $C^{\,\Gamma(v)}$. 
But $G_v^{\,\Gamma(v)}$ is permutation isomorphic to a subgroup of $\AGL(1,q)$, 
with $C^{\,\Gamma(v)} = \soc(G_v^{\,\Gamma(v)})\cong C_q$ as a regular 
normal  subgroup, and so it follows that $G_v^{\,\Gamma(v)}$ is isomorphic to $C_q$ 
or to the dihedral group $D_q$, as required.
\end{proof}
\smallskip

\begin{lemma}
\label{lem:cases3&4}
Cases {\em (3)} and {\em (4)} are impossible. 
\end{lemma}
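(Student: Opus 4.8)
The plan is to use the solubility of $H$ in both cases to push $G_v^{\Gamma(v)}$ into the affine branch of Lemma~\ref{lem:bur}, and then to obtain a contradiction purely from the size of its socle. First I would record that $H$ is soluble in each case: in case (3) the group $H \cong \hat{S}_4$ is soluble, being an extension of the central subgroup $T=\langle\tau\rangle\cong C_2$ by $H/T\cong S_4$, and in case (4) the group $H\cong\SL(2,3)$ is soluble. Since we already know that $G_v/H = G_v/(G_v\cap\SL(2,p))$ is cyclic, it follows in each case that $G_v$ is soluble, and hence so is its quotient $G_v^{\Gamma(v)}$.

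Next I would apply Lemma~\ref{lem:bur} to the soluble quasiprimitive group $G_v^{\Gamma(v)}$: it must be primitive of affine type, so its socle $M=\soc(G_v^{\Gamma(v)})$ is the \emph{unique} minimal normal subgroup, is elementary abelian, and acts regularly of degree $d$; in particular $|M|=d$. Because we are in the situation where $H$ is not contained in $G_v^{[1]}$, the subgroup $H^{\Gamma(v)}$ is a non-trivial normal subgroup of $G_v^{\Gamma(v)}$, and therefore contains the unique minimal normal subgroup $M$. Thus $H^{\Gamma(v)}$ contains an elementary abelian subgroup of order $d$.

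Finally I would invoke Lemma~\ref{lem:dodd}, according to which $H^{\Gamma(v)}$ is a quotient of $H/T$, namely a quotient of $S_4$ in case (3) and of $A_4$ in case (4). A direct inspection shows that every quotient of $S_4$ (one of $S_4$, $S_3$, $C_2$, $1$) and every quotient of $A_4$ (one of $A_4$, $C_3$, $1$) has no elementary abelian subgroup of order greater than $4$, the largest being a copy of $C_2\times C_2$. Since $M\le H^{\Gamma(v)}$ is elementary abelian of order $d\ge 5$ (using the bound $d\ge5$ from Lemma~\ref{lem:dodd}), this is impossible, and both cases are thereby excluded.

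I do not expect a genuine obstacle here, as the whole argument is short once the affine structure is in place; the one point to get exactly right is that the regular elementary abelian socle $M$ itself (rather than merely some minimal normal subgroup) must lie inside $H^{\Gamma(v)}$, which is precisely where the \emph{uniqueness} of the minimal normal subgroup in the soluble primitive case is needed. An alternative route, comparing $d=|M|$ with the transitive degrees available to quotients of $S_4$ and $A_4$, would also work but requires ruling out $d=8$ in case (3) by a separate check that $\ZZ_2^3$ does not embed in $S_4$; the elementary-abelian-subgroup version above avoids that case analysis entirely.
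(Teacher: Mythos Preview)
Your proposal is correct and follows essentially the same line as the paper's proof: solubility of $H$ forces $G_v^{\Gamma(v)}$ to be affine via Lemma~\ref{lem:bur}, the unique elementary abelian socle $M$ of order $d\ge 5$ must sit inside $H^{\Gamma(v)}$, and the latter, being a quotient of $S_4$ or $A_4$, has no elementary abelian subgroup of order exceeding~$4$. Your explicit emphasis on the \emph{uniqueness} of the minimal normal subgroup to force $M\le H^{\Gamma(v)}$ is a nice clarification of a step the paper leaves implicit.
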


\begin{proof}
In these cases, $H\cong \hat{S}_4$ or $\SL(2,3)$, 
so again $H$ is soluble, and $G_v$ and $G_v^{\,\Gamma(v)}$ are soluble too, 
and then by Lemma~\ref{lem:bur}, the socle $M = \soc(G_v^{\,\Gamma(v)})$ is elementary 
abelian of order $d$, and is the only minimal normal subgroup of $G_v$. 
In particular, $H^{\,\Gamma(v)}$ contains $M$, of order $d \ge 5$. 
%$\soc(G_v^{\,\Gamma(v)})\cong \ZZ_q^r$ for $q^r = d$. 
On the other hand, $H^{\,\Gamma(v)}$ is isomorphic to a quotient of $A_4$ or $S_4$, 
and so is isomorphic to one of $S_4$, $A_4$, $S_3$, $C_3$ and $C_2$, 
but none of these groups has an elementary abelian subgroup 
of order at least $5$, a contradiction.  
 \end{proof}

This leaves us with just case (5) to check, with $H\cong \SL(2,5)$.
\smallskip

\begin{lemma}
\label{lem:case5}
Case {\em (5)} is impossible. 
\end{lemma}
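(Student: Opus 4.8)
The plan is to derive a contradiction by showing that in Case~(5) the central subgroup $T = \langle -I_2\rangle$ of $\SL(2,p)$ is forced to be normal in the whole group $G$, even though it lies in the vertex-kernel $G_v^{[1]}$ and is nontrivial. The linchpin is Lemma~\ref{lem:charT}, which makes $T$ rigid enough to survive conjugation.

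First I would record the local data. Since $H = G_v \cap \SL(2,p) \cong \SL(2,5)$ is the intersection of $G_v$ with the normal subgroup $\SL(2,p)$ of $\GL(2,p)$, it is normal in $G_v$, so $G_v$ lies in $N = N_{\GL(2,p)}(H)$; and $T \le H \le G_v$. As $p \equiv \pm 1$ mod $5$ in Case~(5), Lemma~\ref{lem:charT} applies and shows $T$ is characteristic, hence normal, in $G_v$. Exactly as in the proof of Lemma~\ref{lem:dodd}, quasiprimitivity of $G_v^{\,\Gamma(v)}$ together with $d \ge 5$ gives $T^{\,\Gamma(v)} = 1$, i.e. $T \le G_v^{[1]}$, so the involution $t$ generating $T$ fixes $v$ and every neighbour of $v$.

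The crux is to promote normality of $T$ in $G_v$ to normality in $G$. Let $\tau \in G$ reverse an edge at $v$, interchanging $v$ with a neighbour $u$; by connectedness of $\Gamma$ we have $G = \langle G_v, \tau\rangle$, so it suffices to prove $t^\tau = t$. Since $t \in G_v^{[1]}$ fixes the neighbour $u$ of $v$, we have $t \in G_u$, and a direct check shows $t^\tau$ fixes $u$ as well, so $t^\tau \in G_u$ too. Now consider the conjugation action of $G$ on the normal subgroup $P \cong \ZZ_p^2$: the matrix of $t$ is $-I_2$, which is central in $\GL(2,p)$, so $t^\tau$ acts on $P$ as $\tau^{-1}(-I_2)\tau = -I_2$, the same automorphism as $t$. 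Because $C_G(P)$ is semiregular by Lemma~\ref{lem:GvGL}, the induced action of the vertex-stabiliser $G_u$ on $P$ is faithful; as $t$ and $t^\tau$ are elements of $G_u$ inducing the same automorphism $-I_2$, they coincide. Hence $\tau$ normalizes $T$, and so $T \norml \langle G_v, \tau\rangle = G$.

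Finally, $T$ is a nontrivial normal subgroup of $G$ contained in $G_v^{[1]}$. Conjugating $T \le G_v^{[1]}$ by the transitive action of $G$ shows $T \le G_w^{[1]}$ for every vertex $w$, so $T$ fixes every arc of $\Gamma$; faithfulness then gives $T = 1$, contradicting $|T| = 2$ and completing the proof. I expect the middle step, verifying $t^\tau = t$, to be the main obstacle: it is where the centrality of $-I_2$ in $\GL(2,p)$ and the faithfulness of the $G_u$-action on $P$ must be combined precisely, and where the characteristicity supplied by Lemma~\ref{lem:charT} does its essential work.
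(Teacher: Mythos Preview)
Your argument is correct, but the route to $T \norml G$ differs from the paper's. The paper applies Lemma~\ref{lem:charT} to the \emph{arc-stabiliser} $G_{uv}$ rather than to $G_v$: since $T$ is characteristic in $G_{uv}$ and $G_{uv} \norml G_{\{u,v\}}$, one obtains $T \norml G_{\{u,v\}}$, and then $T \norml \langle G_v, G_{\{u,v\}}\rangle = G$. Your approach instead exploits directly that $-I_2$ is central in all of $\GL(2,p)$: under the conjugation map $\phi\colon G \to \Aut(P)\cong \GL(2,p)$ one has $\phi(t^\tau)=\phi(\tau)^{-1}(-I_2)\phi(\tau)=-I_2=\phi(t)$, and since both $t$ and $t^\tau$ lie in $G_u$, on which $\phi$ is injective (because $C_G(P)$ is semiregular), you conclude $t^\tau=t$. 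This is a clean shortcut. One remark: despite your closing comment, Lemma~\ref{lem:charT} is not actually doing essential work in your version. You only need $T \norml G_v$, and that already follows either from the fact that $-I_2$ is the unique involution in $H\cong\SL(2,5)$ (so $T$ is characteristic in $H\norml G_v$), or indeed from your own centrality argument applied with $g\in G_v$ in place of $\tau$. By contrast, in the paper's proof Lemma~\ref{lem:charT} is indispensable, since one needs $T$ to be \emph{characteristic} (not merely normal) in $G_{uv}$ in order to pass normality up to $G_{\{u,v\}}$.
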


\begin{proof}
In this case $p \equiv \pm 1$ mod $5$, and $H\cong \SL(2,5)$,  
so $H/T \cong PSL(2,5) \cong A_5$. 
Since $G_v^{\,\Gamma(v)}\cong G_v/G_v^{[1]}$ has no normal subgroup of order $2$,
we know that $T = \langle -I_2 \rangle$ is a subgroup of $G_v^{[1]}$, 
and hence also of the arc-stabiliser $G_{uv}$. 
Next, $G_{uv}$ is a subgroup of $G_v$, so normalises $H$, and is a subgroup of $\GL(2,p)$. 
By Lemma~\ref{lem:charT} with $N = G_{uv}$, we conclude that  $T$ is characteristic in $G_{uv}$. 
Then since $G_{uv}$ is normal (of index $2$) in the edge-stabiliser $G_{\{u,v\}}$, 
it follows that $T$ is normal in $G_{\{u,v\}}$, and hence also normal in 
the group $G^* = \langle G_v, G_{\{u,v\}} \rangle$. 
But $\Gamma$ is connected and $G$ is transitive on the arcs of $\Gamma$, 
so $ \langle G_v, G_{\{u,v\}} \rangle = G$, and therefore $T$ is normal in $G$. 
On the other hand,  $T$ is contained in $H$ and hence in $G_v$, 
and so cannot be normal in $G$, for otherwise it would fix every vertex of $\Gamma$. 
This contradiction completes the proof. % (of this lemma, and Theorem~\ref{thm:fg}).
\end{proof}

%%%%%%%%%%%%%%%%%%%%
 \section{Proof of Theorem~\ref{thm:f2}}
 \label{sec:proof-thmf2}
%%%%%%%%%%%%%%%%%%%%

Theorem \ref{thm:f2} is an almost immediate consequence of Theorem \ref{thm:fg}. 

\smallskip 
For suppose $d \ge 4$, and $\Gamma$ is a connected $d$-valent $2$-arc-transitive 
graph of order $kp$ or $kp^2$ for some prime $p$, and let $G$ be a $2$-arc-transitive 
group of automorphisms of $\Gamma$.
Then $L = G_v^{\,\Gamma(v)}$ is $2$-transitive and therefore graph-restrictive, 
so Theorem \ref{thm:fg} applies, and we find that if $p \ge kc(L)$ then 
$G_v^{\,\Gamma(v)} \cong C_d$ or $D_d$. 
But the latter are not $2$-transitive, so this implies $p <  kc(L)$, 
and hence there are only finitely many possibilities for $p$, 
and so finitely many possibilities for $\Gamma$. 

Similarly, if $d = 3$ and $\Gamma$ has order $kp$, then only case (i) of part (d) 
of Theorem \ref{thm:fg} is possible, but in that case $G_v^{\,\Gamma(v)} \cong C_d$ 
when $p \ge kc(L)$, and again we find $p <  kc(L)$. 

\medskip 
We will consider the $3$-valent case further in Section \ref{sec:3valentcase}.

%%%%%%%%%%%%
\section{Constructions}
\label{sec:constructions}
%%%%%%%%%%%%

In this section, we consider what happens in case (i) of part (d) of Theorem~\ref{thm:fg}, 
in more detail. 
Before doing this, we describe a generic construction of arc-transitive graphs, 
which is often attributed to Subidussi.

\smallskip
Given a group $G$, a core-free subgroup $H$ of $G$, 
and an element $a\in G\setminus H$ such that $a^2\in H$, 
we may construct a graph $\Gamma(G,H,a)$ with vertex-set 
$(G\!:\!H) =\{Hg : g \in G\}$, and with two (right) cosets $Hx$ and $Hy$ 
adjacent if and only if $xy^{-1} \in HaH$.

Note that right multiplication of cosets by elements of $G$ gives rise to an arc-transitive 
and faithful action of $G$ on $\Gamma(G,H,a)$.  
The stabiliser of the vertex $H$ in the group $G$ is $H$, and multiplication 
by the element $a$ swaps the vertex $H$ with its neighbour $Ha$, 
and the stabiliser in $G$ of the arc $(H,Ha)$ is the subgroup $H\cap H^a$. 
Thus $\Gamma(G,H,a)$ is $G$-arc-transitive, 
with order $|G:H|$ and valency $|H:H\cap H^a|$.

The importance of this construction is reflected in the fact that every 
$G$-arc-transitive graph $\Lambda$ is isomorphic to $\Gamma(G,G_v,a)$, 
where $v$ is an arbitrary vertex of $\Lambda$ and $a$ is an arbitrary
element of $G$ swapping $v$ with a neighbour of $v$.

Also we note that two such graphs $\Gamma(G_1,H_1,a_1)$ and $\Gamma(G_2,H_2,a_2)$ 
are isomorphic whenever there is a group isomorphism 
$\varphi \colon G_1 \to G_2$ mapping $H_1$ to $H_2$ and $a_1$ to $a_2$.
On the other hand, is it sometimes possible for two such graphs to be isomorphic 
in other cases --- even when the groups $G_1$ and $G_2$ are not isomorphic --- 
since it can happen that a graph admits more than one arc-transitive group action. 

\smallskip
We can use the above construction to produce symmetric covers of a given 
arc-transitive graph, as in the proof of the following.  

\begin{theorem}
\label{thm:zeta} 
Let $d$ be an odd prime, let $k$ be an integer with $k\ge 3$, 
and let $p$ be any prime such that $p$ does not divide $k$, and $p \equiv 1$ mod $2d$. 
Also suppose that $\barG$ is a group of order $kd$
such that $\barG/[\barG,\barG] \cong C_{2d}$, and 
 that $\barGa$ is a $d$-valent $\barG$-arc-regular graph of order $k$. 
Then there exists at least one and at most $d-1$ pairwise non-isomorphic $\ZZ_p$-regular 
covering graphs $\Gamma$ for $\barGa$ 
such that the group $\barG$ lifts along the corresponding covering projection $\Gamma \to \barGa$.
Moreover, $\barG$ is generated by elements $h$ and $a$ such that $h^d = a^2 = 1$, 
with $h$ generating the stabiliser of some vertex ${\bar v}$, 
and $a$ interchanging $\bar v$ with one of its neighbours, and then each of these covering 
graphs  is isomorphic to a coset graph $\Gamma(G,\langle zh \rangle, a)$, where 
$G$ is a semi-direct product $C_p \rtimes \barG$, and $z$ is a generator for the 
normal subgroup $C_p$, with $z^a=z^{-1}$ and $z^h=z^\zeta$ 
for some primitive $d$-th root $\zeta$ of $1$ mod $p$. 
\end{theorem}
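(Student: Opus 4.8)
The plan is to fix a suitable generating pair for $\barG$ and then to classify all connected $\ZZ_p$-covers of $\barGa$ to which $\barG$ lifts, by analysing the possible lifted groups. Since $\barGa$ is $\barG$-arc-regular of prime valency $d$, the stabiliser $\barG_{\bar v}$ has order $d$ and acts regularly on the $d$ neighbours of $\bar v$; being transitive of prime order it is cyclic, so $\barG_{\bar v}^{\,\barGa(\bar v)}\cong C_d$, and Lemma~\ref{lem:locallyC} then supplies elements $h$ (of order $d$, generating $\barG_{\bar v}$) and $a$ (of order $2$, swapping $\bar v$ with a neighbour) with $\barG=\langle h,a\rangle$. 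By the coset-graph construction recalled above, $\barGa\cong\Gamma(\barG,\langle h\rangle,a)$, and $\langle h\rangle$ is core-free in $\barG$ since the action is faithful.

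Next I would set up the correspondence between covers and split extensions. If $\Gamma$ is a connected $\ZZ_p$-regular cover of $\barGa$ of order $kp$ to which $\barG$ lifts, then the lifted group $G$ contains the group $\ZZ_p=\langle z\rangle$ of covering transformations as a normal subgroup, with $G/\langle z\rangle\cong\barG$. As $p\nmid kd=|\barG|$, the Schur--Zassenhaus theorem splits this extension, so $G=\langle z\rangle\rtimes\barG$ with $\langle z\rangle\cong\ZZ_p$. The conjugation action of $\barG$ on $\langle z\rangle$ is a homomorphism $\barG\to\Aut(\ZZ_p)\cong C_{p-1}$, and since the target is abelian it factors through $\barG/[\barG,\barG]\cong C_{2d}$; hence it is determined by the images of $h$ and $a$, say $z^h=z^\zeta$ with $\zeta^d=1$ and $z^a=z^\epsilon$ with $\epsilon\in\{1,-1\}$. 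Such actions exist because $p\equiv 1\bmod 2d$, so $C_{p-1}$ contains elements of orders $d$ and $2$.

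Then I would pin down $\zeta$ and $\epsilon$ and realise $\Gamma$ as the asserted coset graph. The vertex stabiliser $G_{\tilde v}$ has order $|G|/(kp)=d$ and projects isomorphically onto $\langle h\rangle$, so it is generated by some $z^{j}h$; conjugating the base point by a power $z^t$ (which replaces $j$ by $j+t(\zeta-1)$, and here uses $\zeta\neq1$) we may assume $G_{\tilde v}=\langle zh\rangle$, whence $\Gamma\cong\Gamma(G,\langle zh\rangle,a)$. A direct computation gives $(zh)^d=z^{s}$, where $s=1+\zeta+\dots+\zeta^{d-1}$ is a geometric sum that is $\equiv 0\bmod p$ precisely when $\zeta\neq1$; since $|\langle zh\rangle|=d$ this forces $\zeta$ to be a primitive $d$-th root of $1$ (and here $d$ prime is used). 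To fix $\epsilon$ I would use that, as $p\nmid|\barG|$, every complement to $\langle z\rangle$ in $G$ is $\langle z\rangle$-conjugate to $\barG$; checking when both $a$ and $zh$ lie in a common conjugate $\barG^{z^i}$ shows that for $\epsilon=1$ they always do (so $\langle zh,a\rangle$ is a proper complement and $\Gamma$ is disconnected), while for $\epsilon=-1$ the element $a$ lies only in $\barG^{z^0}=\barG$ whereas $zh$ lies only in a different conjugate, so no complement contains both and $\langle zh,a\rangle=G$, giving a connected $\Gamma$. Hence $\epsilon=-1$, i.e. $z^a=z^{-1}$, exactly as claimed.

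Conversely, for each of the $\phi(d)=d-1$ primitive $d$-th roots $\zeta$, the group $G=\langle z\rangle\rtimes\barG$ with $z^h=z^\zeta$ and $z^a=z^{-1}$ yields a connected $\ZZ_p$-cover $\Gamma(G,\langle zh\rangle,a)$ of $\barGa$ of order $kp$ with $\barG$ lifting to $G$: core-freeness of $\langle zh\rangle$ is inherited from that of $\langle h\rangle$ in $\barG$, the valency equals $d$ because $\langle zh\rangle\cap\langle zh\rangle^a=1$ (its image in $\barG$ lies in the trivial arc-stabiliser $\langle h\rangle\cap\langle h\rangle^a$, and it meets $\langle z\rangle$ trivially by coprimality), and the quotient by $\langle z\rangle$ recovers $\Gamma(\barG,\langle h\rangle,a)=\barGa$ as a regular covering projection. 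At least one such $\zeta$ exists since $p\equiv1\bmod d$. Thus the classification above shows there are at least one and at most $d-1$ pairwise non-isomorphic such covers (distinct $\zeta$ may produce isomorphic graphs, whence only an upper bound). I expect the main obstacle to be precisely the connectedness analysis that isolates $\epsilon=-1$: establishing via the conjugacy of complements that $\langle zh,a\rangle=G$ if and only if $a$ inverts $z$ is the delicate step, while the order, core-freeness and valency computations are routine.
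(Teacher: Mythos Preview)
Your overall strategy matches the paper's, but the step where you pin down $\zeta$ and $\epsilon$ and normalise to $\Gamma(G,\langle zh\rangle,a)$ has two gaps. First, there is a circularity: you conjugate by $z^t$ to move $G_{\tilde v}$ from $\langle z^{j}h\rangle$ to $\langle zh\rangle$, explicitly flagging that this requires $\zeta\ne1$, and only \emph{afterwards} derive $\zeta\ne1$ from $(zh)^d=1$. Second, having spent your conjugation-by-$\langle z\rangle$ freedom on the stabiliser, you then assert $\Gamma\cong\Gamma(G,\langle zh\rangle,a)$ without justifying that the lifted arc-swapper is $a$ itself rather than some $z^{m}a$. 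Both issues are repairable---for instance, compute $(z^{j}h)^d=z^{j(1+\zeta+\cdots+\zeta^{d-1})}$ and $(z^{m}a)^2=z^{m(1+\epsilon)}$ first, so that $\zeta=1$ forces $j=0$ and $\epsilon=1$ forces $m=0$, and then run your complement argument on the pair $(z^{j}h,\,z^{m}a)$ to exclude the degenerate combinations \emph{before} normalising---but as written the logic does not close.

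The paper sidesteps this entirely by invoking Lemma~\ref{lem:GvGL}: since $P$ is a cyclic normal Sylow $p$-subgroup and $G_v^{\Gamma(v)}\cong C_d$, that lemma gives $C_G(P)=[G,G]$ with $G/C_G(P)\cong C_{2d}$, so $a$ and $h$ act on $P$ with orders exactly $2$ and $d$, yielding $\epsilon=-1$ and $\zeta$ primitive \emph{before} any normalisation. One can then conjugate to make the arc-swapper equal to $a$ (using $\epsilon=-1$), and afterwards rename the generator of $P$ to make the stabiliser $\langle zh\rangle$. For the converse (existence), the paper proves $\langle a,zh\rangle=G$ via the explicit commutator computation $[a,zh]^m=z^{2\zeta m}\ne1$, where $m$ is the order of $[a,h]\in[\barG,\barG]\le C_G(P)$. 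Your Hall-complement conjugacy argument is a clean alternative here; just make explicit the dichotomy that any subgroup of $G$ surjecting onto $\barG$ is either all of $G$ or a complement to $\langle z\rangle$, which holds because $|\langle z\rangle|=p$ is prime.
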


\begin{proof}
First, let $\Lambda$ be any connected $G$-arc-regular graph of order $n$ and 
valency $d$, with vertex-stabiliser $G_v$ ($\cong C_d$). 
Then $G$ has order $dn$, and is generated by an element of order $d$ stabilising a vertex and 
inducing a $d$-cycle on its neighbours, and an element of order $2$ that interchanges 
the vertex with one of its neighbours. Thus $G$ is a homomorphic image of 
the free product $\U = C_2 * C_d = \langle\, x, y \ | \  x^2 = y^d = 1 \,\rangle$,
which is a universal group for such arc-regular actions (for valency $d$).
Moreover, if $\eta\!: \mathcal U \to G$ is the corresponding epimorphism 
then $\Lambda$ is isomorphic to the graph $\Gamma(G,\langle y^\eta \rangle, x^\eta)$, 
and in particular, the order of $\Lambda$ is 
$| G \!:\! \langle y^\eta \rangle | = | \U \!:\! \langle N,y \rangle | 
=  | \U \!:\! N |/d$. 

Now take the given  $d$-valent $\barG$-arc-regular graph $\barGa$ of order $k$, 
with generators $h$ and $a$ for $\barGa$ being the images of $y$ and $x$, %respectively, 
and suppose $\p \colon \Gamma \to \barGa$ is a $\ZZ_p$-regular covering projection for $\barGa$, 
such that $\barG$ lifts along $\p$.  If $G$ is the lift of $\barG$ along $\p$, 
then $G \cong \U/N$ and $\barG \cong \U/K$ for normal subgroups $K$ and $N$ of $\U$, 
such that $K$ has index $kd$ in $\U$, and $N$ is contained in $K$, 
with quotient $K/N$ being cyclic of order $p$. 
To make things easier, we might as well take $G = \U/N$ and $\barG = \U/K$. 

Since $|G| = kpd$, and $p$ is coprime to both $k$ and $d$, 
we find that $P = K/N$ is a cyclic normal Sylow $p$-subgroup of $G$, 
and so by Lemma~\ref{lem:GvGL}, we have $C_G(P) = [G,G] = J \times P$
for some normal subgroup $J$ of $G$.
Also by the Schur-Zassenhaus theorem, we know that $P$ has a complement 
in $G$, and because $G/P \cong (\U/N)/(K/N) \cong \U/K = \barG$ 
(and more importantly, the relations satisfied in $G/P$ by the images of the generators 
of $\U$ are the same as those satisfied by the images in $\barG$), 
we can take this complement to be $\bar G = \langle h, a \rangle$. 
Thus $G$ is isomorphic to a semi-direct product $C_p \rtimes \barG$. 

Next, the generators $a$ and $h$ of $\bar G$ induce automorphisms of $P$,
of orders $2$ and $d$, respectively, since they do not lie in $[G,G] = C_G(P)$.
Moreover, since $-1$ is the only unit of order $2$ mod $p$, it follows that conjugation 
by $a$ inverts every element of $P$, while conjugation by $h$ is exponentation 
by some primitive $d$-th root $\zeta$ of $1$ mod $p$. 

The value of $\zeta$ completely determines the structure of $G$, while on the other 
hand, the covering graph $\Gamma$ is determined by the choice of images of $x$ and $y$ in $G$. 

We can take the images of $x$ and $y$ in $G$ as $z_{1}a$ and $z_{2}h$ where $z_1, z_2 \in P$.  
These have orders $2$ and $d$,  
since $(z_{1}a)^2 = z_{1}z_{1}^a = z_{1}z_{1}^{-1} = 1$, 
and $(z_{2}h)^d = z_{2}^{1+\zeta+\zeta^2+\dots+\zeta^{d-1}}h^d = 1$, 
because $(1-\zeta)(1+\zeta+\dots+\zeta^{d-1}) \equiv 1 - \zeta^d \equiv 0$ mod $p$  
but $1-\zeta \not\equiv 0$ mod $p$. 
Also if we conjugate $z_{1}a$ and $z_{2}h$ by any element $u$ of $P$, 
then we get 
$$u^{-1}(z_{1}a)u = u^{-1}z_{1}u^{-1}a = z_{1}u^{-2}a 
\quad \hbox{and} \quad 
u^{-1}(z_{2}h)u = u^{-1}z_{2}u^{\zeta^{-1}}h = z_{2}u^{-1+\zeta^{-1}}h.$$ 
Since $p$ is odd, we can choose $u$ so that $u^2 = z_1$, and thereby assume 
that the image of $x$ is $a$ itself, and then we can take $z = z_{2}u^{-1+\zeta^{-1}}$, 
so that the image of $y$ is $zh$.  
Note that $z$ is non-trivial, since $a$ and $h$ generate $\bar G$ (rather than $G$), 
and in particular, $z$ is a generator for $P$. 

It follows that the covering graph $\Gamma$ is completely determined by $\zeta$,  
and since there are $d-1$ choices for $\zeta$, there are at most $d-1$ possibilities 
for $\Gamma$, as required.  

Finally, we show that every choice of $\zeta$ gives rise to such a covering $\Gamma$. 
To do this, we simply take $G$ as the semi-direct product $C_p \rtimes_{\zeta} \bar G$ 
given by $\zeta$, and let $z$ be any generator of the normal subgroup $C_p$, 
so that $z^a=z^{-1}$ and $z^h=z^\zeta$.  
Then $a$ and $zh$ are elements of orders $2$ and $d$ in $G$, and so 
we can construct the coset graph $\Gamma(G,\langle zh \rangle,a)$ in the usual way. 
Clearly this has $\Gamma(\bar G,\langle h\rangle,a) \cong \bar\Gamma$ as a quotient, 
and so all we have to do is prove that $a$ and $zh$ generate $G$. 
But now 
$$
[a,zh] = ah^{-1}z^{-1}azh = ah^{-1}z^{-2}ah = az^{-2\zeta}h^{-1}ah 
= z^{2\zeta}ah^{-1}ah = z^{2\zeta}[a,h], 
$$
and $[a,h]$ centralises $z$ (by the definition of $C_p \rtimes_{\zeta} \bar G$), 
so if $[a,h]$ has order $m$ in $\bar G$, then $[a,zh]^m = (z^{2\zeta}[a,h])^m = z^{2\zeta m}$, 
which is non-trivial (since $m$ divides $\bar G| = kd$ and hence is coprime to $p$), 
and therefore $[a,zh]^m$ generates the the normal subgroup $C_p$. 

This completes the proof.
\end{proof}

We note that it is sometimes possible that different choices for $\zeta$ give 
isomorphic covering graphs.  
For example, suppose $\Gamma = \Gamma(G,\langle zh \rangle,a)$ is the covering 
graph given by a particular value of $\zeta$, and there exists an automorphism of $G$ 
that inverts each of the generators $a$ and $zh$ for $G$.  
Then replacing $zh$ by $(zh)^{-1}$, we obtain a graph which is isomorphic to $\Gamma$. 
But $(zh)^{-1} = h^{-1}z^{-1} = z^{-\zeta}h^{-1}$, and conjugation by $h^{-1}$ is 
exponentiation by $\zeta^{-1}$, so it follows that $\zeta^{-1}$ gives the same graph $\Gamma$.

%%%%%%%%%%%%%%%%%%
\section{The 3-valent (cubic) case}
\label{sec:3valentcase}
%%%%%%%%%%%%%%%%%%

In the special case where the valency $d$ is $3$, we know from 
Theorem \ref{thm:f2} that for any fixed positive integer $k$, 
there exist only finitely many connected $2$-arc-transitive cubic 
graphs of order $kp$ where $p$ is prime.
But Theorems \ref{thm:fg} and \ref{thm:zeta} gives us more detailed information. 

Let $\Gamma$ be a connected symmetric cubic graph of order $kp$ where 
$k$ is a given even positive integer, and $p$ is a variable prime such that $p \ge 48k$,  
and  let $G$ be any arc-transitive group of automorphisms of $\Gamma$. 
Then by part (d)(i) of Theorem~\ref{thm:fg} with $L = C_3$ or $S_3$,  we know 
that $G$ has a cyclic normal Sylow $p$-subgroup $P$ which acts semiregularly 
on $\Gamma$, and that $\Gamma$ has a quotient $\bar{\Gamma}$ on 
which $\bar{G} = G/P$ acts arc-transitively.
Also the stabiliser $G_v$ is isomorphic to a subgroup of $\Aut(P)$ 
and is therefore cyclic.  
In particular, $G$ acts regularly on the arcs of $\Gamma$, 
with $G_v$ inducing $C_3$ on $\Gamma(v)$.  
Moreover, $\bar G$ has order $3k$,  %and is generated by two elements of orders $2$ and $3$,
and $\,\bar{G}/[\bar{G},\bar{G}] \cong C_{6}$, and $p \equiv 1 $ mod $6$. 

In this case, $|G| = 3|V(\Gamma)| = 6p$, and by Theorem \ref{thm:zeta}, 
$\bar G$ is generated by elements $h$ and $a$ such that $h^3 = a^2 = 1$, 
and then $G$ is isomorphic to the semi-direct product $C_p \rtimes_{\lambda} \bar G$, 
where $\lambda$ is one of the two non-trivial cube roots of $1$ in $\Z_p$, 
and $h$ and $a$ conjugate a generator $z$ of the normal subgroup $C_p$ 
to $z^\lambda$ and $z^{-1}$ respectively. 

Note that there are just two possibilities for $\lambda$, and each is the inverse 
(or square) of the other.   
Also these two choices for $\lambda$ give non-isomorphic graphs, 
unless there exists an automorphism of the group $\bar G$ that inverts each 
of $h$ and $a$, which happens if and only if $\bar\Gamma$ admits a $2$-arc-regular 
group of automorphisms; see \cite{ConLor} or \cite{DjoMil}. 
 
Note also that if there is no finite group ${\bar G}$ of order $3k$ generated 
by two elements of orders $2$ and $3$ and with commutator subgroup 
$[{\bar G},{\bar G}]$ of index $6$ in ${\bar G}$, then there can be no connected 
symmetric cubic graph of order $kp$ for any prime $p \ge 48k$.

\smallskip
We can now apply this information to small values of $k$.

\subsection{Symmetric cubic graphs of order $2p$} 

Here $k = 2$, and the quotient graph $\bar{\Gamma}$ considered above
is $K_2$. However, by adjusting the definition of the quotient given in Section~\ref{sec:intro}
in a way that allows multiple edges in the quotient (see \cite{elabcov} for details),
one can view the quotient as the cubic dipole, with two vertices and three edges joining them. 
Although this $\bar{\Gamma}$ is not a simple graph, we can still view every 
large connected symmetric cubic graph of order $2p$ where $p$ is prime 
as a cover of $\bar{\Gamma}$.
In particular, $\bar{\Gamma}$ admits a $2$-arc-regular group of 
automorphisms.

Hence for each such prime $p \ge 96$ with $p \equiv 1$ mod $6$,  
there is exactly one arc-regular connected cubic group of order $2p$, 
and its automorphism group is a semi-direct product of $C_p$ by $C_3$.   
Many of these graphs appear in the lists of symmetric cubic graphs 
given in \cite{ConDob} and \cite{newlist}, for orders up to $768$ and $10000$ respectively. 

On the other hand, if $p < 96$, then every such $\Gamma$ has order less than $192$ 
and so appears in \cite{ConDob}.  
Apart from $1$-arc-regular examples with small $p \equiv 1$ mod $6$, 
but $p \ne 7$, there are just four such $\Gamma$, 
namely the $2$-arc-regular graph $\F004 \cong K_4$ (with $p = 2$), 
the $3$-arc-regular graphs $\F006 \cong K_{3,3}$ ($p = 3$), 
the $3$-arc-regular Petersen graph $\F010$ ($p = 5$), 
and the $4$-arc-regular Heawood graph $\F014$ ($p = 7$).  

In summary, every connected symmetric cubic graph of order $2p$ for some prime $p$ 
is either a uniquely determined $1$-arc-regular cubic graph (with $p \equiv 1$ mod $6$ and $p \ne 7$), 
or one of the four small exceptions ($K_4$, $K_{3,3}$, the Petersen graph or the Heawood graph).  
These are the same graphs as the ones found by Cheng and Oxley in \cite{ChengOxley} 
using a different approach.

\subsection{Symmetric cubic graphs of order $4p$} 

Here $k = 4$, but the only group of order $12$ that can be generated by two elements 
of orders $2$ and $3$ is the alternating group $A_4$, and in this group, 
the commutator subgroup has index $3$, not $6$. 
Hence there are no such graphs with $p \ge 48k = 192$. 
For smaller $p$, all the graphs appear in the census \cite{ConDob}. 
Again there are just four possibilities, 
namely the $3$-cube $Q_3 \cong \F008$ (which is $2$-arc-regular, with $p = 2$),  
the dodecahedral graph $\F020A$ and the canonical double 
cover $\F020B$ of the Petersen graph (which are $2$- and $3$-arc-regular respectively, with $p = 5$), 
and the $3$-arc-regular Coxeter graph $\F028$ (with $p = 7$). 
This was shown also in \cite[Section 6]{FengKwak}, by other means.

\subsection{Symmetric cubic graphs of order $6p$} 

Here $k = 6$, and we have just one connected symmetric cubic 
graph of order $k$, namely $K_{3,3}$, which is $3$-arc-transitive. 
Hence we have an infinite family of $1$-arc-regular cubic graphs 
of order $6p$, one for each large prime $p \equiv 1$ mod $6$.  
The only other graphs that arise in this case have order at most $48k^2 = 1728$, 
and so appear in the extended census \cite{newlist}. 
Apart from $1$-arc-regular examples with small $p \equiv 1$ mod $6$, 
there are just three such $\Gamma$, 
namely the $3$-arc-regular Pappus graph $\F018$ (with $p = 3$), 
Tutte's $8$-cage $\F030$ (which is $5$-arc-regular, with $p = 5$), 
and the $4$-arc-regular Sextet graph $S(17) \cong \F102$ (with $p = 17$).  
This classification was achieved also in \cite[Section 5]{FengKwak}, by different means. 
(Incidentally, there is a typographic error in the introduction 
of \cite{FengKwak}, where a claim is made about an infinite family of cubic 
2-arc-regular graphs of order $6p$; the order should be $6p^2$ (not $6p$).)

\subsection{Symmetric cubic graphs of order $8p$} 

Here $k = 8$, and we have just one connected symmetric cubic 
graph of order $k$, namely the cube graph $Q_3$, which is $2$-arc-transitive. 
Hence we have an infinite family of $1$-arc-regular cubic graphs 
of order $8p$, one for each large prime $p \equiv 1$ mod $6$.  
The only other graphs that arise in this case have order at most $48k^2 = 3072$, 
and so all of them appear in the extended census \cite{newlist}. 
Apart from $1$-arc-regular examples with small $p \equiv 1$ mod $6$,  
there are just five such graphs, 
namely the $2$-arc-regular graphs $\F016$ ($p = 2$) and $\F024$ ($p = 3$), 
the $3$-arc-regular graph $\F040$ ($p = 5$), and the graphs 
$\F056B$ and $\F056C$ (which are $2$- and $3$-arc-regular respectively, with $p = 7$).  
This classification was achieved also in \cite{FengKwakWang}, by different means.

\subsection{Symmetric cubic graphs of order $10p$} 

Since there is no group of order $30$ generated by two elements of 
orders $2$ and $3$, there are only finitely many connected symmetric cubic 
graphs of order $10p$ for $p$ prime.  
Moreover, since $p < 48k = 480$ for these, all such graphs appear in the extended 
census \cite{newlist}; but in fact they all appear in \cite{ConDob}. 
Again there are just five possibilities, 
namely the dodecahedral graph $\F020A$ and the canonical double cover $\F020B$ 
of the Petersen graph (which are $2$- and $3$-arc-regular respectively, with $p = 2$), 
Tutte's 8-cage $\F030$ (which is $5$-arc-regular, with $p = 3$), 
the graph $\F050$ (which is a $2$-arc-regular Cayley graph 
for the group $C_5 \wr S_2$, with $p = 5$) and the $3$-arc-regular Coxeter-Frucht graph 
$\F110$ ($p = 11$).  This classification was achieved also in \cite{FengKwak10p},  
by other means.

\subsection{Symmetric cubic graphs of order $12p$} 

Since there is no group of order $36$ generated by two elements of 
orders $2$ and $3$, there are only finitely many connected symmetric cubic 
graphs of order $12p$ for $p$ prime.  
Moreover, since $p < 48k = 576$ for these, all such graphs appear in the extended census \cite{newlist}.  
It is easily checked that there are just four possibilities,  
namely the $2$-arc-regular graphs $\F024$ ($p = 2$), $\F060$ ($p = 5$) 
and $\F084$ ($p = 7$), and the $4$-arc-regular graph $\F204$ ($p = 17$), 
all of which appear in \cite{ConDob}. 
This classification appears to be new.

\subsection{Symmetric cubic graphs of order $14p$} 

Here $k = 14$, and there is just one  possibility for $\bar\Gamma$, namely the 
Heawood graph $F014$, but this is $4$-arc-transitive and admits two arc-regular 
actions, one via the group $C_7 \rtimes_2 C_6$ and another via $C_7 \rtimes_4 C_6$, 
but admits no $2$-arc-regular action. It follows that for every large prime $p \equiv 1$ mod $6$, 
there are two non-isomorphic arc-regular connected cubic graphs of order $14p$. 
%Here `large' means $p \ge 48\cdot 14 = 672$.  
All other graphs that arise in this case have order at most $48k^2 = 9408$, 
and so appear in the extended census \cite{newlist}. 
Apart from other pairs of $1$-arc-regular examples with $p \equiv 1$ mod $6$ for $p > 7$,  
there are just six such graphs, and all of them appear in \cite{ConDob}. 
The exceptions are the $3$-arc-regular Coxeter graph $\F028$ (with $p = 2$), 
the $1$-arc-regular graphs $\F042$ (with $p = 3$) and $\F098B$ (with $p = 7$), 
the $2$-arc-regular graph $\F098B$ (also with $p = 7$), 
and the graphs $\F182C$ and $\F182D$ (which are $2$- and $3$-arc-regular, with $p = 13$).  
This classification seems to be new, as well.  

Note that all arc-transitive abelian regular covers of the graphs $K_{3,3}$ and $Q_3$ and the 
Heawood graph (encountered in the cases $k = 6, 8$ and $14$ above) are described 
in the papers \cite{ConMa,ConMa2}.

\subsection{Symmetric cubic graphs of order $kp$ for larger $k$} 

For slightly larger values of $k$, some more sophisticated arguments can be 
used to deduce the existence of a cyclic normal Sylow $p$-subgroup 
for many values of $p$ less than $48k$.  Even without going into those, 
we know the following:  

When $k = 16$, the graph $\Gamma$ is either a uniquely determined $1$-arc-regular 
cubic graph (with $p \equiv 1$ mod $6$) or one of a small finite list of exceptions, 
which includes the $2$-arc-regular graphs $\F032$, $\F048$ and $\F112B$, 
and the $3$-arc-regular graphs $\F080$ and $\F112B$.

When $k = 18$, the graph $\Gamma$ is either a uniquely determined $1$-arc-regular 
cubic graph (with $p \equiv 1$ mod $6$), or one of a small finite list of exceptions, 
which includes the $2$-arc-regular graph $\F054$, 
and the $5$-arc-regular graphs $\F090$ and $\F234B$. 

When $k = 20$, the graph $\Gamma$ is one of only a small finite number of possibilities,  
which include the $2$-arc-regular graph $\F060$, $\F220A$ and $\F220B$, 
the $3$-arc-regular graphs $\F040$ and $\F220C$, and 
the $4$-arc-regular graph $\F620$. 

Moreover, it is not difficult to obtain the theorem below.

\begin{theorem}
There are infinitely many $($even$)$ values of $k$ for which there are 
only finitely many connected symmetric cubic graphs of order $kp$. 
In particular, this is true for all $k$ of the form $2\ell$ where $\ell$ is 
a prime congruent to $5$ mod $6$.  
\end{theorem}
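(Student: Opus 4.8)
The plan is to reduce the statement to a purely group-theoretic nonexistence result, using the observation recorded just above (namely that if there is no group $\barG$ of order $3k$ that is generated by two elements of orders $2$ and $3$ and has $[\barG,\barG]$ of index $6$ in $\barG$, then no connected symmetric cubic graph of order $kp$ exists for any prime $p \ge 48k$). Granting this, it suffices to exhibit infinitely many even $k$ admitting no such $\barG$: for then every symmetric cubic graph of order $kp$ has $p < 48k$, so its order $kp$ is bounded, and there are only finitely many graphs of bounded order. I would take $k = 2\ell$ where $\ell$ is a prime with $\ell \equiv 5 \pmod 6$, so that $3k = 6\ell$, and show that no group of order $6\ell$ enjoys both required properties.

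First I would classify the groups of order $6\ell$. Since $\ell \ge 5$, a Sylow count (treating $\ell = 5$ by the usual element-counting argument for groups of order $30$) shows that the Sylow $\ell$-subgroup is normal and cyclic, so $\barG = C_\ell \rtimes Q$ with $Q \cong C_6$ or $S_3$, the action being a homomorphism into $\Aut(C_\ell) \cong C_{\ell-1}$. Here the hypothesis $\ell \equiv 5 \pmod 6$ does the essential work: it forces $3 \nmid \ell-1$, hence $\gcd(6,\ell-1) = 2$, so $\Aut(C_\ell)$ contains no element of order $3$ and the image of $Q$ has order at most $2$. This leaves exactly four groups to inspect.

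Next I would compute the commutator subgroup in each case. When the action is trivial we obtain $C_6 \times C_\ell$ (commutator subgroup trivial) or $S_3 \times C_\ell$ (commutator subgroup $C_3$); when the image is $C_2$ we obtain $D_\ell \times C_3$ or the group $C_\ell \rtimes S_3$ in which the transpositions invert $C_\ell$ (commutator subgroups $C_\ell$ and $C_{3\ell}$ respectively). Comparing indices, the only group whose commutator subgroup has index $6$ is $\barG = D_\ell \times C_3$. The crux is then to show that this $\barG$ is not generated by two elements of orders $2$ and $3$: since $\ell$ is prime and $\ell > 3$, the dihedral factor $D_\ell$ has no element of order $3$, so every element of order $3$ lies in the $C_3$ factor, while every involution lies in the $D_\ell$ factor; hence any element of order $3$ together with any involution generates a subgroup contained in $\langle r \rangle \times C_3 \cong C_6$ for some reflection $r$, which has order $6 \ne 6\ell$. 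Thus no suitable $\barG$ of order $6\ell$ exists.

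Finally, since there are infinitely many primes congruent to $5$ modulo $6$ (by Dirichlet's theorem, or by an elementary Euclid-style argument applied to $6p_1\cdots p_r - 1$), we obtain infinitely many even values $k = 2\ell$ of the required form, completing the proof. The main obstacle is the group-theoretic verification in the third step, and in particular the observation that $D_\ell \times C_3$—the unique group of order $6\ell$ with commutator subgroup of index $6$—fails to be generated by an involution together with an element of order $3$.
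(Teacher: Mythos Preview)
Your proof is correct, but it takes a genuinely different route from the paper's. You classify all groups of order $6\ell$ (using normality of the Sylow $\ell$-subgroup and the fact that $3 \nmid \ell-1$ limits the action), identify $D_\ell \times C_3$ as the unique one with commutator subgroup of index $6$, and then observe that in this direct product the elements of order $3$ and the involutions live in different factors and hence commute, so they cannot generate the whole group. The paper instead argues directly from a hypothetical $\barG$ with both properties: since $[\barG,\barG] \cong C_\ell$ and $3 \nmid \ell-1$, the order-$3$ generator $h$ centralises $[\barG,\barG]$, so $[\barG,\barG]\langle h\rangle$ is cyclic of order $3\ell$ and (having index $2$) normal; its unique subgroup $\bar H$ of order $3$ equals $\langle h\rangle$ and is normal in $\barG$, whence $\barG/\bar H$ has order $2\ell$ but is generated by the image of $a$ alone, a contradiction.

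Your classification approach is more systematic and makes the landscape of groups of order $6\ell$ explicit, at the cost of a small case analysis and the special handling of $\ell=5$. The paper's argument is shorter and avoids the classification entirely, extracting the contradiction straight from the two hypotheses; in effect it locates the same obstruction (that $h$ lies in a normal subgroup of order $3$) without ever naming the ambient group as $D_\ell \times C_3$.
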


\begin{proof}
Let $k = 2\ell$ where $\ell$ is as given. 
By Theorem~\ref{thm:fg}, in order for there to exist infinitely many such graphs, 
there must be a finite group $\bar G$ of order $3k$ 
generated by two elements $a$ and $h$ of orders $2$ and $3$, 
with commutator subgroup $\bar G' = [\bar G,\bar G]$ of order $\ell$ and index $6$.  
Then since $\ell$ is prime, $\bar G'$ is cyclic. 
Also the generator $h$ of order $3$ for $\bar G$ must centralise $\bar G'$  
(since $\ell \not\equiv 1$ mod $3$), and it follows that $\bar G$ has a cyclic 
normal subgroup of order $3\ell$ and index $2$.  
This contains subgroups of orders $3$ and $\ell$ that are 
characteristic in $\bar G'$ and hence normal in $\bar G$, 
and then factoring out the characteristic subgroup $\bar H$ of order $3$ 
gives a dihedral quotient $\bar G/\bar H$ of order $2\ell$.  
But on the other hand, since $\bar H$ contains the generator $h$ of order $3$, 
this quotient $\bar G/\bar H$ is generated by the image of the involuntary element $a$,
which is clearly impossible.  
\end{proof}

Finally, we note that this argument works also for other values of $k$ for which there 
exists no finite group of order $3k$ generated by two elements of orders $2$ and $3$, 
with commutator subgroup of index $6$.   
For small $k$, checking for the existence of groups with the required 
properties is an easy exercise using {\sc Magma} \cite{magma}. 

Hence, for example, there are only finitely many connected symmetric cubic 
graphs of order $kp$ for $p$ prime when 
$k = $ 4, 10, 12, 20, 22, 28, 30, 34, 36, 40, 44, 46, 52, 58, 60, 66, 68, 70, 76, 80,
82, 84, 88, 90, 92, 94 or 100; 
and on the other hand, there is an infinite family of such graphs whenever 
$k = $ 2, 6, 8, 14, 16, 18, 24, 26, 32, 38, 42, 48, 50, 54, 56, 62, 64, 72, 74, 78, 86, 96 or 98.

\bigskip
\bigskip
\centerline{\sc Acknowledgements} 

The authors acknowledge the use of {\sc Magma} \cite{magma} in testing 
various matters considered in this paper. 

The first author was generously supported by a James Cook Fellowship from the 
Royal Society of New Zealand, and a grant from the N.Z. Marsden Fund.
\bigskip

%%%%%%%%%%%%%%

\bigskip\bigskip

\end{document}